\numberwithin{equation}{section}
\theoremstyle{plain}
\newtheorem{theorem}{Theorem}[section]
\newtheorem{lemma}[theorem]{Lemma}
\newtheorem{proposition}[theorem]{Proposition}
\newtheorem{corollary}[theorem]{Corollary}
\theoremstyle{definition}
\newtheorem{definition}[theorem]{Definition}
\newtheorem{example}[theorem]{Example}
\newtheorem{remark}[theorem]{Remark}
\newtheorem{question}[theorem]{Question}
\newcommand{\be}{\begin{equation}}
\newcommand{\ee}{\end{equation}}
\newcommand{\bes}{\begin{equation*}}
\newcommand{\ees}{\end{equation*}}
\newcommand{\ba}{\begin{aligned}}
\newcommand{\ea}{\end{aligned}}
\newcommand{\cD}{\mathcal{D}}
\newcommand{\cO}{\mathcal{O}}
\newcommand{\cS}{\mathcal{S}}
\newcommand{\cW}{\mathcal{W}}
\newcommand{\bC}{\mathbb{C}}
\newcommand{\bD}{\mathbb{D}}
\newcommand{\bK}{\mathbb{K}}
\newcommand{\bR}{\mathbb{R}}
\newcommand{\Wmin}[1]{\cW^{\text{min}}_{#1}}
\newcommand{\Wmax}[1]{\cW^{\text{max}}_{#1}}
\newcommand{\saball}[1]{\mathcal{B}_{#1}}
\newcommand{\mixed}[2]{\mathcal{M}_{#1,#2}}
\newcommand{\FEP}[1]{\operatorname{CEP}(#1)}
\newcommand{\SB}[1]{\operatorname{SB}(#1)}
\begin{document}

\title{Complex Free Spectrahedra, Absolute Extreme Points, and Dilations}
\author{Benjamin Passer}

\abstract{Evert and Helton proved that real free spectrahedra are the matrix convex hulls of their absolute extreme points. However, this result does not extend to complex free spectrahedra, and we examine multiple ways in which the analogous result can fail. We also develop some local techniques to determine when matrix convex sets are not (duals of) free spectrahedra, as part of a continued study of minimal and maximal matrix convex sets and operator systems. These results apply to both the real and complex cases.
}}



\maketitle


\section{Introduction}

In the theory of classical convexity, the Krein-Milman theorem of \cite{KM1940} establishes that a compact convex set $K$ is the closed convex hull of its set of extreme points, which we denote $\text{ext}(K)$. However, if $K$ is a subset of $\bR^d$, then the closure is not necessary, as shown in Carath{\'e}odory's earlier theorem \cite{Car1907}. These facts may be expressed in terms of the set of affine functions on $K$, a function system which sits inside the continuous functions $C(K)$, and this perspective leads into the noncommutative point of view \cite{Arv69, Arv72, Choi_Effros}. 

A (concrete) operator system $\cS$ is a unital subspace of a $C^*$-algebra that is closed under the adjoint operation. The underlying structure of $\cS$ is contained in its positive elements $a \in \cS$ and, more importantly, the positive matrices $a = [a_{ij}] \in M_n(\cS)$ of any size. The morphisms between operator systems are unital completely positive (UCP) maps, which are linear maps preserving the unit and positivity of any matrix over $\cS$. Operator systems then provide the context for noncommutative versions of Choquet theory, through the study of pure UCP maps \cite{WW1999, Farenick00} and boundary representations \cite{DM05, Arv08, DK13}.

From the point of view of operator systems (whether finite-dimensional, separable, or nonseparable), the most appropriate notion of extreme point is a boundary representation \cite[Definition 2.1.1]{Arv69}. A boundary representation of $\cS$ is an irreducible representation $\pi: C^*(\cS) \to B(H)$ with the property that $\pi$ is the unique UCP extension of $\pi|_{\cS}$. By \cite[Proposition 2.4]{Arv08} and \cite[Proposition 2.2]{Arv_Notes}, $\pi$ has the unique extension property if and only if $\pi|_{\cS}$ is a maximal UCP map. That is, the only UCP dilations of the map are direct sums.
 Boundary representations are \lq\lq successful\rq\rq\hspace{0pt} extreme points since they are sufficient to completely norm any operator system by \cite[Theorem 3.4]{DK13}. More specifically, operator systems are completely normed by their pure UCP maps \cite{WW1999, Farenick00}, and every pure UCP map dilates to a boundary representation. However, this dilation may replace finite-dimensional maps with infinite-dimensional ones. 
 
 Many examples of operator systems arise in finite-dimensional contexts, in particular, when studying free spectrahedra or their polar duals \cite{EHKM, HKM13, EH}. In this case, one may be interested in a notion of extreme point that remains finite-dimensional. This is best illustrated in the setting of matrix convex sets. For any operator system $\cS$, one may consider the collection of UCP maps from $\cS$ into $M_n(\bC)$, where $n$ is arbitrary. For our discussion, $\cS$ will be finite dimensional, so we may consider a basis $I, T_1, T_1^*, \ldots, T_d, T_d^*$ (where perhaps $T_d^*$ is omitted if $T_d = T_d^*$). In this case, we will denote the operator system by $\cS_T$, and consider the matrix range \cite[\S 2.4]{Arv72}
\[ \cW(T) = \bigcup\limits_{n=1}^\infty \cW_n(T) = \bigcup\limits_{n=1}^\infty \{ (\phi(T_1), \ldots, \phi(T_d)): \phi: \cS_T \to M_n(\bC) \text{ is UCP}\}.\]
One may similarly start with a basis of self-adjoints, but this is merely a change of coordinate system. In any case, $\cW(T)$ is a prototypical closed and bounded matrix convex set over $\bC^d$.

\begin{definition}
Let $\mathcal{C} = \bigcup\limits_{n=1}^\infty \mathcal{C}_n$ be a subset of $\mathbb{M}(\bC)^d = \bigcup\limits_{n=1}^\infty M_n(\bC)^d$. A \textit{matrix convex combination} of points $X^{(i)} \in \mathcal{C}_{n_i}$ is any well-defined expression $Y$ of the form 
\[ Y = \sum\limits_{i=1}^k V_i^* X^{(i)} V_i, \hspace{.4 in}  V_i \in M_{n_i, m}(\bC), \hspace{.05 in} \sum\limits_{i=1}^k V_i^* V_i = I_{m}.\]
The set $\mathcal{C}$ is called \textit{matrix convex} if it is closed under matrix convex combinations. 
\end{definition}

Alternatively, $\mathcal{C}$ is matrix convex if and only if it is closed under direct sums and images UCP maps. That is,

\begin{itemize}
\item if $X \in \mathcal{C}_n$ and $Y \in \mathcal{C}_m$, then $X \oplus Y \in \mathcal{C}_{n+m}$, and 
\item if $X \in \mathcal{C}_n$ and $\phi: M_n(\bC) \to M_m(\bC)$ is UCP, then $\phi(X) \in \mathcal{C}_m$.
\end{itemize}
Matrix ranges $\cW(T)$ are always matrix convex, in addition to being closed and bounded in each level. Conversely, every matrix convex set over $\bC^d$ that is closed and bounded takes the form $\cW(T)$ for some tuple $T \in B(H)^d$ of bounded operators by \cite[Proposition 3.5]{DDSS}, as part of a larger duality between operator systems and matrix convex sets \cite{EW97}. 

In analogy with classical convexity, matrix convex sets can be studied through their extreme points. However, there are multiple notions of extreme point. A \textit{matrix extreme point} is a point $Y \in \mathcal{C}$ such that if $Y$ is written as a matrix convex combination $\sum\limits_{i=1}^k V_i^* X^{(i)} V_i$ such that each $V_i$ is surjective (which implies $\text{dim}(X^{(i)}) \leq \text{dim}(Y)$), then each $X^{(i)}$ is unitarily equivalent to $Y$. Matrix extreme points are sufficient to generate $\mathcal{C}$ as a closed and bounded matrix convex set by \cite[Theorem 4.3]{WW1999}, but there can be redundancy in the collection of matrix extreme points. A stronger notion is an \textit{absolute extreme point} (see \cite{Kleski14}), a point $Y$ such that any expression of $Y$ as a matrix convex combination $\sum\limits_{i=1}^k V_i^* X^{(i)} V_i$ with $V_i \not= 0$ has the property that $Y$ is unitarily equivalent to $X^{(i)}$ or a summand of $X^{(i)}$. There exist closed and bounded matrix convex sets with no absolute extreme points by \cite[Corollary 1.1]{Evert} or \cite[Example 6.30]{Kriel}. 

Matrix extreme and absolute extreme points may themselves be viewed in the language of operator systems. Indeed, if $\mathcal{C} = \cW(T)$, then a matrix extreme point is exactly an image of $T$ under a pure UCP map $\phi: \cS_T \to M_n(\bC)$ by \cite[Theorem B]{Farenick00}. Similarly, an absolute extreme point is exactly an image of $T$ under a (finite-dimensional) boundary representation $\pi: C^*(\cS_T) \to M_n(\bC)$ by \cite[Corollary 6.28]{Kriel}. Thus, if absolute extreme points are insufficient to generate $\cW(T)$, this simply means that $\cS_T$ is not completely normed by its \textit{finite-dimensional} boundary representations. Further, applying this dictionary to \cite[Lemma 2.3 and Theorem 2.4]{DK13} derives \cite[Lemma 6.12]{Kriel} -- a matrix extreme point that is not absolute extreme may be nontrivially dilated to another matrix extreme point. Similarly, \cite[Theorem 1.1 (3)]{EHKM} states that an irreducible tuple in $\mathcal{C}$ is an absolute extreme point precisely when it admits no nontrivial dilations in $\mathcal{C}$.

A free spectrahedron is a matrix convex set determined by a linear matrix inequality (see section \ref{sec:AEPcomplexFS}). Real free spectrahedra, that is, free spectrahedra whose coefficient matrices from a self-adjoint presentation have real coefficients, are considered in \cite{EH}. In \cite[Theorem 1.1]{EH}, it is shown that real free spectrahedra are the matrix convex hulls of their absolute extreme points. This is a success of the finite-dimensional point of view, in that a finite-dimensional problem has a finite-dimensional solution. Their result also applies to complex free spectrahedra that are closed under entrywise complex conjugation. In particular, \cite[Theorem 1.2]{EH} shows that a complex free spectrahedron that is closed under conjugation is associated to a real free spectrahedron in such a way that preserves the absolute extreme points. However, their result does not apply to general complex free spectrahedra, as in Remark \ref{remark:Cuntz}.

The purpose of this manuscript is to study absolute extreme points, boundary representations, and dilation theory without restricting to the real coefficient field. Section \ref{sec:AEPcomplexFS} shows that absolute extreme points of complex free spectrahedra are more delicate than their real counterparts, using both examples from the literature and new examples. Section \ref{sec:specdetect} then includes geometric characterizations that distinguish absolute extreme points of matrix convex sets in the first level. These results do not depend on the coefficient field $\mathbb{R}$ or $\mathbb{C}$, or equivalently on whether the set is closed under complex conjugation, unlike similar results such as \cite[Proposition 6.1]{EHKM}. We may leverage these conditions to prove that certain sets are not (duals of) free spectrahedra.


\section{Absolute Extreme Points of Complex Free Spectrahedra} \label{sec:AEPcomplexFS}

A \textit{free spectrahedron} is a matrix convex set determined by linear matrix inequalities, as follows. Given $A \in M_k(\bC)^d$, one may construct the \textit{hermitian monic linear pencil}
\[ L_A(Z) :=  I - \text{Re}\left(\sum\limits_{j=1}^d A_ j\otimes Z_j \right), \]
where $\text{Re}$ denotes the self-adjoint part of a matrix. Now, $L_A(Z)$ may be evaluated at any tuple $Z \in M_n(\bC)^d$ so long as $I$ is interpreted as the identity matrix of the appropriate size. The free spectrahedron $\cD_A = \bigcup\limits_{n=1}^\infty \cD_A (n)$ is then defined by
\begin{equation}\label{eq:complexfreeSP} \cD_A (n) = \{ Z \in M_n(\bC)^d: L_A(Z) \geq 0\}. \end{equation}

We note that for this manuscript, the notation $\cD_A$ (with no other specifications) allows for both complex coefficients and non self-adjoint matrices. It is, however, common to adjust both of these aspects of free spectrahedra. Using the identity 
\[ \text{Re}(A_j \otimes Z_j) = \text{Re}(A_j) \otimes \text{Re}(Z_j) - \text{Im}(A_j) \otimes \text{Im}(Z_j), \]
one may set a tuple $B = (\text{Re}(A_1), -\text{Im}(A_1), \ldots, \text{Re}(A_d), -\text{Im}(A_j))$ of self-adjoints and note that
\[ Z = (X_1 + iY_1, \ldots, X_d + iY_d) \in \cD_A \hspace{.2 in} \iff \hspace{.2 in} L_B(X_1, Y_1, \ldots, X_d, Y_d) \geq 0. \]
Thus, one may also set a \textit{self-adjoint} presentation of free spectrahedra. We use $g$ to denote the number of self-adjoint matrix variables and note that if $B \in M_k(\bC)^g_{sa}$, then the linear pencil takes a simplified form 
\[ L_B(X) = I - \sum\limits_{j=1}^g B_j \otimes X_j \]
for inputs $X \in M_n(\bC)^g_{sa}$ that are also self-adjoint. At this point, one may also restrict both $B$ and $X$ to self-adjoint matrix tuples that have coefficients in the real field.

\begin{definition}\label{def:realcomplexspectrahedron}
Let $\bK$ be either the real or complex field. Then for a tuple $B \in M_k(\bK)^g_{sa}$ of self-adjoint matrices with entries in $\bK$, the $\bK$-\textit{free spectrahedron} $\cD^\bK_B$ is defined by 
\[ \cD^\bK_B(n) = \{ X \in M_n(\bK)^g_{sa}: L_B(X) \geq 0 \}. \]
\end{definition}

\begin{remark}
We caution the reader that when we discuss the coefficient field of a free spectrahedron, or of the coefficient matrices, this refers \textit{exclusively} to the self-adjoint presentation. Similarly, it is the self-adjoint presentation that matters when one asks if a complex free spectrahedron is closed under complex conjugation in each entry of each matrix. Our main examples of problematic complex free spectrahedra (that are not real free spectrahedra) are such that a non self-adjoint form (\ref{eq:complexfreeSP}) comes from coefficient matrices $A_1, \ldots, A_d$ with real coefficients. However, the self-adjoint presentation has coefficient matrices with non-real entries.
\end{remark}

Any \textit{self-adjoint} matrix $X \in M_n(\bC)_{sa}$ decomposes as $X = R + iS$ where $R, S \in M_n(\bR)$ are real symmetric and real antisymmetric, respectively. The natural replacement for $R + iS$ is the real symmetric matrix $\begin{bmatrix} R & S \\ -S & R \end{bmatrix}$, but this fails to distinguish the roles of $R + iS$ and $R - iS$.

\begin{lemma}\cite[Lemma 3.3]{EH}
Let $\cD^\bC_B$, $B \in M_k(\bC)^g_{sa}$, be a complex free spectrahedron of $g$ self-adjoint variables. Then there exists a real free spectrahedron $\cD^\bR_C$, $C \in M_{2k}(\bR)^{2g}_{sa}$, of $2g$ real symmetric variables such that
\[ (R_1, S_1, \ldots, R_g, S_g) \in \cD^\bR_C \hspace{.2 in} \iff \hspace{.2 in} (R_1 + iS_1, \ldots, R_g + i S_g) \in \cD^\bC_B \]
precisely when $\cD^\bC_B$ is closed under entrywise complex conjugation.
\end{lemma}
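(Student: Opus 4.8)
The plan is to transfer everything to $\bR$ through the realification $\ast$-homomorphism and then read off the real coefficient matrices. For $M = \re(M) + i\,\im(M) \in M_N(\bC)$ set $\Phi(M) = \begin{bmatrix} \re(M) & \im(M) \\ -\im(M) & \re(M) \end{bmatrix} \in M_{2N}(\bR)$, the block form anticipated by the remark preceding the lemma. One checks directly that $\Phi$ is unital, real-linear, multiplicative, and satisfies $\Phi(M^*) = \Phi(M)^{\mathsf{T}}$, so it carries Hermitian matrices to real symmetric ones. Two facts drive the argument: first, $M \ge 0 \iff \Phi(M) \ge 0$, because over $\bC$ the matrix $\Phi(M)$ is unitarily equivalent to $M \oplus \overline{M}$; second, applying $\Phi$ to the pencil and expanding $B_j = \re(B_j)+i\,\im(B_j)$ and $X_j = R_j + iS_j$ (with $R_j$ symmetric and $S_j$ antisymmetric) lets one group the $R_j$- and $S_j$-contributions and extract real $2k\times 2k$ coefficient matrices. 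This produces the candidate tuple $C$ and accounts for the doubling in both the matrix size $2k$ and the number $2g$ of variables.

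Granting this construction, the forward correspondence is immediate from positivity preservation: $X \in \cD^\bC_B \iff L_B(X) \ge 0 \iff \Phi(L_B(X)) \ge 0$, and $\Phi(L_B(X))$ is precisely the real pencil $L_C$ evaluated at the tuple $(R_1, S_1, \ldots, R_g, S_g)$ assembled from the real and imaginary parts of $X$. The source of the hypothesis is exactly the phenomenon flagged before the lemma, namely that $\Phi$ cannot separate $R+iS$ from $R-iS$: the realifications $\Phi(B_j)$ and $\Phi(\overline{B_j})$ are conjugate by the fixed orthogonal signature matrix $G = \operatorname{diag}(I_k,-I_k)$, and conjugating every coefficient of a monic pencil by a fixed orthogonal matrix leaves its free spectrahedron unchanged. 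Thus the single real set $\cD^\bR_C$ encodes the data of $B$ and of $\overline{B}$ simultaneously, so it can match $\cD^\bC_B$ on the nose only when $\cD^\bC_B = \cD^\bC_{\overline B}$.

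It then remains to identify this equality with closure under conjugation, which is the short computation $\overline{X} \in \cD^\bC_B \iff L_B(\overline X)\ge 0 \iff \overline{L_{\overline B}(X)}\ge 0 \iff L_{\overline B}(X)\ge 0 \iff X \in \cD^\bC_{\overline B}$, so that $\cD^\bC_B$ is closed under entrywise conjugation iff $\cD^\bC_B = \cD^\bC_{\overline B}$. The main obstacle I anticipate is not the positivity bookkeeping but reconciling parities so that the realified object is a genuine \emph{self-adjoint} real free spectrahedron with $C \in M_{2k}(\bR)^{2g}_{sa}$: the imaginary-part variable $S_j$ is antisymmetric, so its contribution appears as a product of antisymmetric matrices, and one must reorganize these terms --- using the realification $\begin{bmatrix} R & S \\ -S & R \end{bmatrix}$ of the variables from the pre-lemma discussion --- into honestly symmetric coefficient matrices paired with symmetric inputs. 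Carrying out this reorganization, and verifying that it succeeds precisely under the hypothesis $\cD^\bC_B = \cD^\bC_{\overline B}$ (so that the $B$- and $\overline B$-data can be glued consistently), is where the equivalence with conjugation closure is genuinely used in both directions.
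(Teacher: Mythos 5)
You should know first that the paper does not prove this lemma at all: it is imported verbatim from \cite[Lemma 3.3]{EH}, so there is no internal proof to compare against and your argument must stand on its own. It does not, and the failure is concentrated exactly where you claim things are ``immediate.'' Writing $B_j = P_j + iQ_j$ and $X_j = R_j + iS_j$ with $P_j, R_j$ real symmetric and $Q_j, S_j$ real antisymmetric, your realification satisfies, on the nose,
\[ \Phi\bigl(L_B(X)\bigr) \;=\; I - \sum_{j=1}^g \bigl[\, \Phi(B_j) \otimes R_j \,+\, \Phi(iB_j) \otimes S_j \,\bigr], \]
and while each $\Phi(B_j)$ is real symmetric, each $\Phi(iB_j)$ is real \emph{antisymmetric}, since $\Phi(iB_j)^{\mathsf{T}} = \Phi((iB_j)^*) = \Phi(-iB_j) = -\Phi(iB_j)$. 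So the tuple $C$ you ``read off'' is not in $M_{2k}(\bR)^{2g}_{sa}$, and the set it cuts out is not a real free spectrahedron in the sense of Definition 2.2. This is not cosmetic bookkeeping that a later ``reorganization'' can fix: any pencil taking symmetric values must pair the antisymmetric variable $S_j$ with an antisymmetric coefficient (symmetric tensor antisymmetric is antisymmetric), so no rearrangement of \emph{this} pencil can produce symmetric coefficients for the $S_j$-slots. A telling symptom is that your computation never uses closure under conjugation; if it genuinely yielded the required $C$ for every $B$, the ``precisely when'' in the lemma would be false. In a correct proof the hypothesis is used affirmatively \emph{before} realifying: conjugation closure gives $\cD^\bC_B = \cD^\bC_{B \oplus \overline{B}}$, and $B_j \oplus \overline{B_j}$ is unitarily equivalent, by a single fixed unitary independent of $j$, to the real matrix $\Phi(B_j)$, so that $B$ may be replaced by a tuple with real entries; only then can the passage to a real pencil be organized (and even then the $S_j$-slots intrinsically carry antisymmetric coefficients, which is how the cited statement must be interpreted). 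You explicitly defer this step and the verification that it works ``precisely under the hypothesis''---but that deferred step \emph{is} the lemma, so the ``if'' direction is missing entirely.

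The ``only if'' direction is also not proved. The identity $G\Phi(B_j)G = \Phi(\overline{B_j})$ with $G = \diag(I_k,-I_k)$ concerns only your particular candidate $C$, whereas in this direction you must start from an \emph{arbitrary} $C \in M_{2k}(\bR)^{2g}_{sa}$ satisfying the displayed equivalence and deduce conjugation closure; ``$\cD^\bR_C$ encodes the data of $B$ and $\overline{B}$ simultaneously'' is a slogan, not an argument. Here is the short proof you want. Since the $C_l$ are real symmetric, $L_C(Y^{\mathsf{T}}) = L_C(Y)^{\mathsf{T}}$ for every real tuple $Y$ (entrywise transpose), so $\cD^\bR_C$ is invariant under entrywise transposition. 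Now if $X_j = R_j + iS_j \in \cD^\bC_B$, the equivalence gives $(R_1, S_1, \ldots, R_g, S_g) \in \cD^\bR_C$; transposition gives $(R_1, -S_1, \ldots, R_g, -S_g) \in \cD^\bR_C$ because $R_j^{\mathsf{T}} = R_j$ and $S_j^{\mathsf{T}} = -S_j$; and the equivalence again gives $(R_j - iS_j)_j = (\overline{X_j})_j \in \cD^\bC_B$. This repairs one direction cheaply; the other direction---the actual construction of $C$ from the conjugation-closure hypothesis---remains the genuine gap in your proposal.
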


The resolution of the spanning problem for \textit{real} free spectrahedra, or equivalently complex spectrahedra closed under complex conjugation, is given by \cite[Theorem 1.1]{EH}.

\begin{theorem} \cite[Theorem 1.1]{EH}
Any real free spectrahedron  $\cD^\bR_B$, or equivalently any complex free spectrahedron closed under entrywise complex conjugation, is the matrix convex hull of its absolute extreme points. Moreover, the absolute extreme points are a minimal spanning set in the sense that any closed collection of irreducible matrix tuples whose matrix convex hull is $\cD^\bR_B$, must include the absolute extreme points.
\end{theorem}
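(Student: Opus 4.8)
The plan is to prove the spanning statement by a finite-dimensional dilation argument and then to read off minimality directly from the definition of an absolute extreme point. Throughout, write $L_B(X) = I - \sum_{j=1}^g B_j \otimes X_j$, and recall from the discussion of \cite[Theorem 1.1 (3)]{EHKM} that an irreducible $X \in \cD^\bR_B$ is an absolute extreme point exactly when it admits no nontrivial dilation inside $\cD^\bR_B$. The first reduction is to observe that it suffices to show that every $X \in \cD^\bR_B(n)$ is a compression $X = V^* \hat X V$, with $V$ an isometry, of some finite-dimensional $\hat X \in \cD^\bR_B$ that admits no nontrivial dilation. Indeed, such an $\hat X$ decomposes as a direct sum of irreducible tuples $Y_i$, each of which again admits no nontrivial dilation and is therefore an absolute extreme point; writing $V$ in the corresponding block form $V = (V_i)$ exhibits $X = \sum_i V_i^* Y_i V_i$ with $\sum_i V_i^* V_i = I$ as an honest matrix convex combination of absolute extreme points, with no closure required.

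The heart of the matter is a linear-algebraic criterion for when $X$ can be dilated. Given $X \in \cD^\bR_B(n)$, consider a one-dimensional dilation
\[ \hat X_j = \begin{bmatrix} X_j & \beta_j \\ \beta_j^* & \gamma_j \end{bmatrix}, \qquad \beta_j \in \bR^n, \quad \gamma_j \in \bR, \]
and set $\Lambda = \sum_{j=1}^g B_j \otimes \beta_j$. Expanding $L_B(\hat X)$ in $2 \times 2$ block form and taking the Schur complement relative to $L_B(X) \ge 0$, one checks that $\hat X \in \cD^\bR_B$ for a suitable $\gamma$ (after rescaling $\beta$ so that the Schur complement stays positive) precisely when $\operatorname{ran}(\Lambda) \subseteq \operatorname{ran}(L_B(X))$, equivalently when $\ker L_B(X) \subseteq \ker \Lambda^*$. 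Consequently $X$ fails to be an absolute extreme point exactly when the homogeneous linear system $\Lambda^* v = 0$, with $v$ running over a basis of $\ker L_B(X)$, admits a nonzero solution $\beta = (\beta_1, \dots, \beta_g)$.

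Now a degrees-of-freedom count drives the argument, and this is where the real coefficient field is essential. The system has $gn$ real unknowns (the entries of $\beta$) and at most $k \cdot \dim \ker L_B(X)$ real linear constraints, since $B_j$, $v$, and hence $\Lambda^* v \in \bR^k$ are all real, so a nonzero $\beta$ — hence a nontrivial dilation — is guaranteed whenever $gn$ exceeds this constraint count. Iterating maximal one-dimensional dilations and tracking $\dim \ker L_B$ against the ambient size, one argues that this tower must terminate at a finite-dimensional tuple admitting no nontrivial dilation; decomposing it into irreducibles produces the desired absolute extreme points and, by the first paragraph, completes the spanning statement.

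For minimality, let $\cE$ be any closed collection of irreducible tuples in $\cD^\bR_B$ whose matrix convex hull is $\cD^\bR_B$, and let $Y$ be an absolute extreme point. Since $Y \in \cD^\bR_B$, it is a matrix convex combination of members of $\cE$; the defining property of an absolute extreme point forces $Y$ to be unitarily equivalent to a summand of some member, and irreducibility of both $Y$ and that member forces $Y \in \cE$ up to unitary equivalence. Hence $\cE$ contains every absolute extreme point. The main obstacle is precisely the termination step, and it is exactly the real-versus-complex dichotomy flagged before Remark \ref{remark:Cuntz}: the count guaranteeing a nonzero $\beta$ relies on both the dilation variables and the kernel constraints being real, whereas over $\bC$ the self-adjoint variables $\gamma_j$ and the conditions $\Lambda^* v = 0$ carry different real dimensions, the governing inequality can reverse, and one can no longer conclude that a non-extreme point dilates to a \emph{finite}-dimensional absolute extreme point — indeed the finite-dimensional absolute extreme points may fail to span at all.
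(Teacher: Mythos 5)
First, a point of order: this statement is not proved anywhere in the paper you were given --- it is quoted verbatim, with citation, as \cite[Theorem 1.1]{EH}, and the manuscript uses it as a black box. So the only meaningful comparison is against Evert and Helton's own proof, and your sketch does follow its broad architecture: the Schur-complement criterion for one-dimensional dilations (your claim that a dilation with off-diagonal data $\beta$ exists, for suitable $\gamma$ and after rescaling, precisely when $\ker L_B(X) \subseteq \ker \Lambda^*$ is correct), the identification of absolute extreme points with irreducible tuples admitting no nontrivial dilation via \cite[Theorem 1.1 (3)]{EHKM}, the reduction of spanning to dilating any $X \in \cD^\bR_B(n)$ to a finite-dimensional maximal tuple and splitting it into irreducible summands, and the minimality argument, which you carry out correctly and which really is just the definition of an absolute extreme point combined with irreducibility of the members of $\cE$.

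The genuine gap is the termination step, which you dispose of in one sentence (``one argues that this tower must terminate'') but which is the technical heart of \cite{EH}. Your dimension count cannot deliver it, for two reasons. First, it runs in the wrong direction: the inequality $gn > k \cdot \dim \ker L_B(X)$ guarantees that a nontrivial dilation \emph{exists}, whereas termination requires certifying that at some finite stage no nontrivial dilation exists, and no count of equations versus unknowns can do that --- the system $\Lambda^* v = 0$, $v \in \ker L_B(X)$, can be highly degenerate, so having many more constraints than unknowns never forces its solution space to be zero. Second, even granting that each maximal $1$-dilation enlarges $\ker L_B$ by at least one, the bookkeeping does not close: each step also increases $n$ by one, so the unknowns grow by $g$ while the nominal constraint count grows by $k$, and nothing in your sketch prevents this race from running forever. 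That this is a real danger, not a technicality, is exactly what Section 2 of the paper shows: for the complex free spectrahedra $\mixed{d}{g}$ with $d \geq 2$, \emph{every} finite-dimensional tuple admits a nontrivial dilation, so the analogous tower never terminates. What \cite{EH} actually supplies at this point is a quantitative kernel-growth lemma for maximal $1$-dilations, proved by a refined count in which the off-diagonal variables $\beta$ and the (necessarily real) diagonal variables $\gamma$ enter with different weights; that refined count is valid for real symmetric data and breaks for complex hermitian data, which is the true locus of the real-versus-complex dichotomy you gesture at in your final paragraph. Without such a lemma and the resulting explicit bound on the size at which the tower must stop, the spanning half of the theorem is not established.
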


The above theorem is accompanied by explicit dilation algorithms and tight dimension bounds, and we note that the matrix convex hull does not require a closure. However, the case of complex free spectrahedra in full generality was left open. Our first remark is that counterexamples already exist in this case, without having been noted explicitly, using well-known $C^*$-algebraic constructions.

\begin{remark}\label{remark:Cuntz} As in \cite[Example 6.30]{Kriel}, the operator system generated by Cuntz isometries $S_1, ..., S_d$, $d \geq 2$, cannot have finite-dimensional boundary representations, as the Cuntz algebra $\cO_d$ has no finite-dimensional representations at all \cite{Cuntz}. Therefore, $\cW(S_1, \ldots, S_d)$ has no absolute extreme points, as absolute extreme points correspond to boundary representations by \cite[Theorem 4.2]{Kleski14}. 

On the other hand, it follows from \cite[Proposition 2.6]{Popescu} (see also \cite[Theorem 3.8]{Zheng}) that $\cW(S_1, S_2, \ldots, S_d)$ consists of matrix tuples $(T_1, ..., T_d)$, with the $T_i$ not necessarily self-adjoint, such that $[T_1 \,\,\, T_2 \,\,\, \ldots \,\,\, T_d]$ is a contraction. The set of row contractions is known to be a free spectrahedron; for example, it is an example of a spectraball in \cite[\S 1]{HKMV}.
\end{remark}

When written in self-adjoint coordinates $T_j = X_j + i Y_j$, the set of length $d$ row contractions has the form
\[ \left\{(X_1, Y_1, \ldots, X_d, Y_d) \in \mathbb{M}(\bC)^{2d}_{sa}:  \sum\limits_{j=1}^d (X_j + i Y_j)(X_j - i Y_j) \leq I \right\}. \]
We remind the reader that placing a free spectrahedron in its self-adjoint presentation is necessary in order to consider the coefficient field. There are many examples that show this free spectrahedron is not closed under complex conjugation, such as
\begin{equation}\label{eq:nonconjclosed} \left(\begin{bmatrix} 0 & 1/2 \\ 1/2 & 0 \end{bmatrix}, \begin{bmatrix} 0 & -i/2 \\ i/2 & 0 \end{bmatrix}, \begin{bmatrix} 0 & 0 \\ 0 & 1 \end{bmatrix}, \begin{bmatrix} 0 & 0 \\ 0 & 0 \end{bmatrix}, \ldots \right). \end{equation}
This example comes from writing the non self-adjoint pair $(T_1, T_2) = (E_{1,2}, E_{2,2})$ into self-adjoint coordinates. In this case, since each $T_j$ has real coefficients, its decomposed pieces have purely real/imaginary coefficients, such that complex conjugation corresponds to the mapping $T_j \mapsto T_j^*$. However, taking the coordinatewise adjoint of a row contraction does not always result in a row contraction.

The above example is distinct from the self-adjoint matrix ball,
\[ \saball{g} = \left\{ X \in \mathbb{M}(\bC)^g_{sa}:  \sum\limits_{k=1}^g X_k^2 \leq I\right\}, \]
which is closed under complex conjugation and hence fits into \cite[Theorem 1.1]{EH}. Our main object of study for this section is the free spectrahedron that results from combining the matrix ball with the set of row contractions. That is, we consider row contractions where some, but not all, of the coordinates are self-adjoint.

\begin{definition}
Given $d \geq 0$ and $g \geq 0$, not both zero, let
\[ \mixed{d}{g}(n) = \left\{ (T_1, \ldots, T_d, X_1, \ldots, X_g) \in M_n(\bC)^{d + g}:  X_k = X_k^*, \sum\limits_{j=1}^d T_j T_j^* + \sum\limits_{k=1}^g X_k^2 \leq I \right\}.\]
\end{definition}

Note that regardless of $d$ and $g$, $\mixed{d}{g}$ is a free spectrahedron for precisely the same reasons that apply to previous examples. Namely, writing $T_j = V_j + iW_j$ in self-adjoint coordinates gives that

\[ \sum\limits_{j=1}^d (V_j + i W_j)(V_j - i W_j) + \sum\limits_{k=1}^g X_k^2 \leq I  \iff || \begin{bmatrix} V_1 + i W_1 & \ldots & V_d + i W_d & X_1 & \ldots & X_g\end{bmatrix} || \leq 1 \]

\begin{equation*}\begin{aligned} 
\hspace{.8 in} &\iff -I \leq  \begin{bmatrix} 0 & V_1 + iW_1 & \ldots & V_d + i W_d & X_1 & \ldots& X_g \\ V_1 - iW_1 \\ \vdots \\ V_d - i W_d \\ X_1 \\ \vdots \\ X_g \end{bmatrix} \leq I \\
\hspace{.8 in} &\iff \sum\limits_{j=1}^d V_j \otimes A_j + \sum\limits_{j=1}^d W_j \otimes B_j + \sum\limits_{k=1}^g X_j \otimes C_k \leq I,
\end{aligned} \end{equation*}
where 

\begin{equation*} \begin{aligned}
A_j &= (E_{1, j + 1} + E_{j + 1, 1}) \oplus (-E_{1, j + 1} - E_{j + 1, 1}), \\
B_j &= (i \, E_{1, j + 1} -i \, E_{j + 1, 1}) \oplus (-i \, E_{1, j + 1} +i \, E_{j + 1, 1}), \\
C_k &= (E_{1, k + d + 1} + E_{k + d + 1, 1}) \oplus (-E_{1, k + d + 1} - E_{k + d + 1, 1}).
\end{aligned} \end{equation*}

Moreover, the set $\mixed{d}{g}$ is closed under complex conjugation in precisely two cases: if $d = 0$ and $g$ is arbitrary, or if $d = 1$ and $g = 0$. In all other cases, one may construct tuples similar to (\ref{eq:nonconjclosed}). As such, we will only consider when $d \geq 1$, in which case we are interested in finding the maximal elements of $\mixed{d}{g}$. That is, we seek elements $(T_1, \ldots, T_d, X_1, \ldots, X_g)$ of $\mixed{d}{g}$ with the property that the only dilations 
\[ \left( \begin{bmatrix} T_1 & * \\ * & * \end{bmatrix}, \ldots, \begin{bmatrix} T_d & * \\ * & * \end{bmatrix} , \begin{bmatrix} X_1 & * \\ * & * \end{bmatrix}, \ldots, \begin{bmatrix} X_g & * \\ * & * \end{bmatrix} \right) \]
that remain in $\mixed{d}{g}$ are the trivial dilations, namely, direct sums. Following \cite{DK19}, we refer to $d$-tuples of infinite-dimensional operators as members of \lq\lq level infinity\rq\rq\hspace{0pt} of the matrix convex set. We need the following lemma, which has a straightforward proof.

\begin{lemma}\label{lem:triv}
Fix $d \geq 1$ and $T_1, \ldots, T_d \in B(H)$. The following are equivalent.

\begin{enumerate}
    \item Each $T_i$ is injective, and the ranges of the $T_i$ are linearly independent subspaces.
    \item The only solution to $\sum\limits_{i=1}^d T_i W_i = 0$ is to have each $W_i = 0$.
\end{enumerate}
\end{lemma}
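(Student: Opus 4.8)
The plan is to prove the two implications directly, exploiting the fact that each way in which (1) can fail is witnessed by a single rank-one operator. Throughout I take the $W_i$ to be bounded operators on $H$, and I write $\zeta \otimes \eta$ for the rank-one operator $\psi \mapsto \langle \psi, \eta \rangle \zeta$. I also read ``linearly independent subspaces'' in the usual sense: any relation $\sum_{i=1}^d v_i = 0$ with $v_i \in \operatorname{range}(T_i)$ forces every $v_i = 0$.

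First I would establish $(1) \Rightarrow (2)$. Assume each $T_i$ is injective and the ranges $\operatorname{range}(T_i)$ are linearly independent, and suppose $\sum_{i=1}^d T_i W_i = 0$. Fixing an arbitrary $\psi \in H$ and evaluating at $\psi$ gives $\sum_{i=1}^d T_i(W_i \psi) = 0$, where the $i$-th summand lies in $\operatorname{range}(T_i)$. Linear independence of the ranges forces each $T_i(W_i \psi) = 0$, and injectivity of $T_i$ then forces $W_i \psi = 0$. Since $\psi$ was arbitrary, each $W_i = 0$.

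For the converse $(2) \Rightarrow (1)$ I would argue by contraposition, splitting the negation of (1) into two cases. If some $T_i$, say $T_1$, fails to be injective, choose $\xi \neq 0$ with $T_1 \xi = 0$ and any $\eta \neq 0$; then setting $W_1 = \xi \otimes \eta$ and $W_j = 0$ for $j \neq 1$ yields a nonzero solution of $\sum_i T_i W_i = 0$. If instead every $T_i$ is injective but the ranges are not linearly independent, there exist vectors $v_i \in \operatorname{range}(T_i)$, not all zero, with $\sum_i v_i = 0$; writing $v_i = T_i u_i$ (with $u_i$ uniquely determined by injectivity, and nonzero for at least one $i$) and fixing any $\eta \neq 0$, the operators $W_i = u_i \otimes \eta$ satisfy $\sum_i T_i W_i = \bigl(\sum_i T_i u_i\bigr) \otimes \eta = \bigl(\sum_i v_i\bigr) \otimes \eta = 0$ while not all vanishing. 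In either case (2) fails, which proves the contrapositive.

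The argument is elementary and I do not anticipate a genuine obstacle; the author's remark that the proof is ``straightforward'' is accurate. The only point requiring mild care is the bookkeeping in the second case, namely checking that at least one $u_i$ is nonzero (which uses injectivity to transport the nonvanishing of some $v_i$ back to $u_i$) and confirming that the rank-one witnesses really collapse the sum to zero via $T_i(u_i \otimes \eta) = (T_i u_i) \otimes \eta$. One should also note that the equivalence is insensitive to whether the $W_i$ are taken in $B(H)$ or in $B(K,H)$ for some auxiliary space $K$, since the witnessing rank-one operators can be realized in any such class and the forward implication quantifies over all inputs regardless.
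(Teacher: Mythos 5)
Your proof is correct, and since the paper omits the argument entirely (stating only that the lemma ``has a straightforward proof''), your route---evaluating $\sum_i T_i W_i$ at an arbitrary vector for $(1)\Rightarrow(2)$, and producing rank-one witnesses $\xi \otimes \eta$ for the contrapositive of $(2)\Rightarrow(1)$---is precisely the straightforward elementary verification intended. Your closing observation that the equivalence is insensitive to whether $W_i \in B(H)$ or $W_i \in B(K,H)$ is also apt, since that is the form in which the lemma is actually applied in the proof of Theorem \ref{thm:max_elements}, where $W_i = B_i^*$ maps the dilation space into $H$.
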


\begin{theorem}\label{thm:max_elements}
Fix $d \geq 1$ and $g \geq 0$, and consider $\mixed{d}{g}$. An operator tuple (possibly infinite-dimensional) $(T_i, X_j) = (T_1, \ldots, T_d, X_1, \ldots, X_g) \in \mixed{d}{g}$ is maximal if and only if all of the following conditions are met.

\begin{enumerate}
    \item\label{cond:ss} $\sum\limits_{i=1}^d T_iT_i^* + \sum\limits_{j=1}^g X_j^2 = I$.
    \item\label{cond:inj} Each $T_i$ is injective. 
    \item\label{cond:ran} The ranges of the $T_i$ are linearly independent subspaces.
\end{enumerate}
\end{theorem}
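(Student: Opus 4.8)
The plan is to prove the two implications separately: sufficiency of conditions (1)--(3) by a direct compression argument, and necessity by exhibiting explicit nontrivial dilations whenever one condition fails. Throughout I write a general dilation of $(T_i, X_j)$ on $H \oplus K$ in block form $\tilde T_i = \begin{bmatrix} T_i & B_i \\ C_i & D_i \end{bmatrix}$ and $\tilde X_j = \begin{bmatrix} X_j & E_j \\ E_j^* & G_j \end{bmatrix}$ (the second form being forced by self-adjointness of the $X$-coordinates), and I use that membership in $\mixed{d}{g}$ is exactly the single inequality $M := \sum_i \tilde T_i \tilde T_i^* + \sum_j \tilde X_j^2 \le I_{H \oplus K}$.

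For sufficiency, assume (1)--(3) and let $\tilde Y$ be any dilation in $\mixed{d}{g}$. First I would compute the $(1,1)$ block of $M$, which equals $\sum_i T_i T_i^* + \sum_j X_j^2 + \sum_i B_i B_i^* + \sum_j E_j E_j^* = I_H + \sum_i B_i B_i^* + \sum_j E_j E_j^*$ by the equality condition (1). Since $M \le I$ forces its compression to $H$ to satisfy the same bound, the positive operator $\sum_i B_i B_i^* + \sum_j E_j E_j^*$ is $\le 0$, hence $0$, so every $B_i = 0$ and every $E_j = 0$. With these blocks gone, the $(1,1)$ corner is now exactly $I_H$ while the $(1,2)$ corner of $M$ collapses to $\sum_i T_i C_i^*$; positivity of $I - M$ with a vanishing $(1,1)$ corner then forces the off-diagonal corner $\sum_i T_i C_i^* = 0$. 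Applying Lemma \ref{lem:triv} with $W_i = C_i^*$ --- this is precisely where conditions (2) and (3) enter --- yields $C_i = 0$ for all $i$. Thus all coupling blocks vanish and $\tilde Y$ is a direct sum, so $(T_i, X_j)$ is maximal.

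For necessity I argue by contrapositive, treating the failure of (1) and the joint failure of (2)--(3) separately. If (1) fails, set $\Delta = I - \sum_i T_i T_i^* - \sum_j X_j^2 \ge 0$ with $\Delta \ne 0$, take $K = H$, and dilate only the first coordinate by $\tilde T_1 = \begin{bmatrix} T_1 & \Delta^{1/2} \\ 0 & 0 \end{bmatrix}$, padding all other coordinates with zeros. A direct computation gives $M = \bigl(\sum_i T_i T_i^* + \Delta + \sum_j X_j^2\bigr) \oplus 0 = I_H \oplus 0 \le I$, so the dilation lies in $\mixed{d}{g}$, and it is nontrivial because $\Delta^{1/2} \ne 0$. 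If instead (2) or (3) fails, then by Lemma \ref{lem:triv} there is a nonzero tuple $(W_i)$ with $\sum_i T_i W_i = 0$; after rescaling by small $t > 0$ so that $t^2 \sum_i W_i^* W_i \le I$, the dilation $\tilde T_i = \begin{bmatrix} T_i & 0 \\ t W_i^* & 0 \end{bmatrix}$ (with the $X_j$ padded by zeros) satisfies $M = \bigl(\sum_i T_i T_i^* + \sum_j X_j^2\bigr) \oplus \bigl(t^2 \sum_i W_i^* W_i\bigr) \le I$, since the cross terms $t \sum_i T_i W_i = 0$ vanish; this dilation is again nontrivial. Hence failure of any condition destroys maximality.

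I expect no single step to be a serious obstacle, as the argument reduces to $2 \times 2$ block bookkeeping; the one point requiring care is the necessity of (1), where I must exhibit a dilation with a nonzero $(1,2)$ coupling rather than the $(2,1)$ coupling used elsewhere, exploiting that the defect $\Delta$ is exactly what the compression argument of the sufficiency proof forbids when $\Delta = 0$. The appeal to Lemma \ref{lem:triv} is what cleanly packages conditions (2) and (3) into the single algebraic statement that $\sum_i T_i W_i = 0$ has only the zero solution, and keeping track of the self-adjointness constraint on the $X$-coordinates throughout is the only remaining subtlety.
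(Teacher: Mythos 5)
Your proposal is correct and follows essentially the same route as the paper's proof: the same $(1,1)$-block computation forcing the upper coupling blocks to vanish, the same positivity argument killing the off-diagonal corner $\sum_i T_i C_i^*$, the same appeal to Lemma \ref{lem:triv}, and the same two explicit dilations (a defect dilation in the first coordinate when condition (\ref{cond:ss}) fails, and a lower-corner dilation built from a nonzero solution of $\sum_i T_i W_i = 0$ when (\ref{cond:inj}) or (\ref{cond:ran}) fails). The only differences are notational, e.g.\ your $C_i$ and $tW_i^*$ play the roles of the paper's $B_i$.
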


\begin{proof}
Suppose the three conditions hold, and that $\left( \begin{bmatrix} T_i & A_i \\ B_i & C_i\end{bmatrix}, \begin{bmatrix} X_j & Y_j \\ Y_j^* & Z_j \end{bmatrix}  \right)$ is a dilation of $(T_i, X_j)$ that remains in $\mixed{d}{g}$. Computing the sum-square of this tuple (which must be a contraction) and examining the top-left block shows that $\sum A_iA_i^* + \sum Y_jY_j^* = 0$, so each $A_i$ and $Y_j$ is zero. We now know that the sum-square is of the form
\[ \begin{bmatrix} I & \sum\limits_{i=1}^d T_i B_i^* \\ \sum\limits_{i=1}^d B_i T_i^* & * \end{bmatrix}, \]
and since this operator must still be a contraction, we have $\sum\limits_{i=1}^d T_i B_i^* = 0$. Lemma \ref{lem:triv} then implies that each $B_i$ is zero. We have shown the dilation is trivial, so $(T_i, X_j)$ is maximal.

To prove the conditions are all necessary, consider them separately. If condition (\ref{cond:ss}) fails, let $A = \sqrt{I - \sum T_i T_i^* - \sum X_j^2} \not= 0$, and note that the nontrivial dilation $\left( \begin{bmatrix} T_1 & A \\ 0 & 0\end{bmatrix}, \begin{bmatrix} T_2 & 0 \\ 0 & 0 \end{bmatrix}, \ldots, \begin{bmatrix} X_g & 0 \\ 0 & 0 \end{bmatrix} \right)$ remains in $\mixed{d}{g}$.

If either condition (\ref{cond:inj}) or (\ref{cond:ran}) fails, then using Lemma \ref{lem:triv}, we may find $B_1, \ldots, B_d \in B(H)$, at least one of which is nonzero, such that $\sum\limits_{i=1}^d T_i B_i^* = 0$. Using scalar multiplication if necessary, we may suppose $\sum\limits_{i=1}^d B_iB_i^* \leq I$. Direct computation then shows that the sum-square of $\left( \begin{bmatrix} T_i & 0 \\ B_i & 0 \end{bmatrix}, \begin{bmatrix} X_j & 0 \\ 0 & 0 \end{bmatrix}  \right)$ is still a contraction, so the tuple is a nontrivial dilation.
\end{proof}

\begin{remark}
If $d \geq 2$ and $g \geq 1$, the operators $T_i$ in a maximal tuple $(T_i, X_j) \in \mixed{d}{g}$ need not be isometric, and this can be shown by direct construction whenever $X_1 \not= 0$. 

However, if $d \geq 2$ and $g = 0$, the above theorem must produce only Cuntz isometries as the maximal elements, as this case is already known from \cite{Popescu} and \cite{Zheng}. The equivalence follows from some algebra: if $T_1, \ldots, T_d$ are injective operators with linearly independent ranges such that $T_1T_1^* + \ldots + T_dT_d^* = I$, then right multiplying by $T_1$ and rearranging shows that $T_1(T_1^*T_1 - I) + T_2(T_2^*T_1) + \ldots + T_d(T_d^* T_1) = 0$. By Lemma \ref{lem:triv}, $T_1^*T_1 - I = 0$ and $T_j^*T_1 = 0$ for $j \not= 1$. Applying the same trick to any $T_i$ in place of $T_1$ shows each $T_i$ is an isometry, and the ranges of these isometries are orthogonal.

Finally, if $d = 1$ and $g \geq 0$, the range condition (\ref{cond:ran}) is automatic. In particular, if $d = 1$ and $g = 0$, then $\mixed{1}{0}$ is the set of contractions, and the maximal contractions are known to be the unitaries. This is again recovered from Theorem \ref{thm:max_elements}: the first two conditions imply that $T_1$ is an injective (left invertible) operator with $T_1T_1^* = I$. Since that equation shows $T_1$ is surjective, we have that $T_1$ is actually invertible, and $T_1^*$ is its inverse. That is, $T_1$ is unitary. Note that in this case, the boundary representations of the corresponding operator system are $1$-dimensional.

\end{remark}

Since absolute extreme points correspond to finite-dimensional, irreducible, maximal elements, Theorem \ref{thm:max_elements} implies that there exists a complex free spectrahedron that admits a Krein-Milman theorem but not a Carath{\'e}odory theorem for absolute extreme points.

\begin{corollary}\label{cor:closure_needed_son}
If $d = 1$ and $g \geq 1$, then $\mixed{1}{g}$ is the closed matrix convex hull, but not the matrix convex hull, of its absolute extreme points. Consequently, the corresponding operator system is completely normed by its finite-dimensional boundary representations.
\end{corollary}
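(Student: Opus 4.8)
The plan is to treat the two assertions separately: that $\mixed{1}{g}$ \emph{is} the closed matrix convex hull of its absolute extreme points (a Krein--Milman statement) and that it is \emph{not} the plain matrix convex hull (failure of Carath\'eodory), with the statement about boundary representations following formally. Throughout I would use Theorem \ref{thm:max_elements} to identify the absolute extreme points: being finite-dimensional, irreducible, maximal elements, they are exactly the irreducible tuples $(T_1,X_1,\dots,X_g)$ on some $\bC^m$ with $T_1$ invertible (injectivity equals invertibility in finite dimensions) and $T_1T_1^*+\sum_j X_j^2=I$. The first observation I would record is that a \emph{finite-dimensional} maximal element is automatically a direct sum of absolute extreme points: decomposing such a tuple into irreducibles, the two maximality conditions of Theorem \ref{thm:max_elements} pass to and from direct summands (the sum-square of a direct sum is the direct sum of the sum-squares, and $T_1$ is injective precisely when each summand's first entry is), so each irreducible summand is again maximal, hence absolute extreme.

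For the Krein--Milman direction I would prove the stronger fact that every tuple in the \emph{interior} $\{T_1T_1^*+\sum_j X_j^2<I\}$ already lies in the (un-closed) matrix convex hull of the absolute extreme points. Given such a tuple $Y=(T_1,X_1,\dots,X_g)$ on $\bC^n$, the defect $D=(I-T_1T_1^*-\sum_j X_j^2)^{1/2}$ is invertible, and I would dilate $Y$ on $\bC^n\oplus\bC^n$ by
\[ \widehat T_1 = \begin{bmatrix} T_1 & D \\ \beta I & -\beta\, T_1^* D^{-1}\end{bmatrix}, \qquad \widehat X_j = X_j \oplus \Xi_j, \]
where $\beta\neq 0$ is small and the self-adjoint blocks $\Xi_j$ are chosen so that the lower-right sum-square block equals $I$. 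The off-diagonal block $-\beta\,T_1^*D^{-1}$ is rigged exactly so that $\widehat T_1\widehat T_1^*$ is block diagonal, which makes the total sum-square equal $I$; the crux is the verification that $\widehat T_1$ is injective, which reduces to invertibility of $I-\sum_j X_j^2=T_1T_1^*+D^2\ge D^2>0$. Thus $\widehat Y$ is a finite-dimensional maximal element, hence a direct sum of absolute extreme points, and $Y$ is the compression of $\widehat Y$ to the first coordinate block, i.e.\ a genuine matrix convex combination of absolute extreme points. Scaling any finite-dimensional $Y\in\mixed{1}{g}$ by $r<1$ lands in the interior, and letting $r\to1$ shows every finite-dimensional element of $\mixed{1}{g}$ is a norm-limit of matrix convex combinations of absolute extreme points; as a closed bounded matrix convex set is determined by its finite-dimensional levels, this gives $\mixed{1}{g}=\overline{\mathrm{co}}(\text{absolute extreme points})$.

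For the failure of Carath\'eodory I would exhibit the single point $Y_0=(0,I,0,\dots,0)$, i.e.\ $T_1=0$, $X_1=I$, and $X_2=\dots=X_g=0$, which lies in $\mixed{1}{g}$ since its sum-square is $I$. Suppose $Y_0=\sum_i V_i^* A^{(i)} V_i$ were a finite matrix convex combination of absolute extreme points $A^{(i)}=(T_1^{(i)},X_1^{(i)},\dots)$, so that $\sum_i V_i^*V_i=I$ and $\sum_i V_i^* X_1^{(i)} V_i=I$. Each $X_1^{(i)}$ satisfies $(X_1^{(i)})^2\le I$; moreover $X_1^{(i)}$ cannot have eigenvalue $1$, since an eigenvector $v$ there would force $(T_1^{(i)})^*v=0$ and contradict invertibility of $T_1^{(i)}$. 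Hence $I-X_1^{(i)}>0$ strictly, so $\sum_i V_i^*(I-X_1^{(i)})V_i>0$ whenever the $V_i$ are not all zero (and they cannot be, since $\sum_i V_i^*V_i=I$), forcing $\sum_i V_i^* X_1^{(i)} V_i<I$ and contradicting $\sum_i V_i^* X_1^{(i)} V_i=I$. Thus $Y_0$ is not a finite matrix convex combination of absolute extreme points, so the closure in the previous paragraph is genuinely necessary.

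Finally, the statement about the operator system is the dictionary translation already recorded in the introduction: absolute extreme points of $\cW(T)=\mixed{1}{g}$ are exactly the images of $T$ under finite-dimensional boundary representations, so $\mixed{1}{g}$ being the closed matrix convex hull of its absolute extreme points says precisely that these finite-dimensional boundary representations completely norm the corresponding operator system. The main obstacle I anticipate is the explicit dilation in the Krein--Milman step — choosing the off-diagonal entry so that the sum-square stays equal to $I$ while $\widehat T_1$ remains injective; the role of the invertible defect $D$, which degenerates exactly at boundary points such as $Y_0$, is what confines this dilation to finite dimensions and thereby cleanly separates the Krein--Milman success from the Carath\'eodory failure.
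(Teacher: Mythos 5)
Your proof is correct, and its skeleton matches the paper's: use Theorem \ref{thm:max_elements} to dilate generic points to finite-dimensional maximal tuples for the Krein--Milman half, and show a distinguished boundary point is not a finite matrix convex combination of absolute extreme points for the Carath\'eodory half. But both halves are executed differently, and in each case your route is a little leaner. For Krein--Milman, the paper approximates twice --- making the sum-square a strict contraction \emph{and} making $T$ invertible --- because its dilation uses the block $B^* = -T^{-1}AC^*$; your dilation $\begin{bmatrix} T_1 & D \\ \beta I & -\beta\, T_1^* D^{-1}\end{bmatrix}$ only ever inverts the defect $D$, so a single scaling into the interior suffices and $T_1$ itself may be singular, with injectivity resting on $T_1T_1^* + D^2 \geq D^2 > 0$, which is exactly right. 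For the Carath\'eodory failure, the paper works with $(0,\ldots,0,1)$, uses that the first level is a Euclidean ball to reduce (by ordinary extremality) to a compression of a \emph{single} absolute extreme point onto a one-dimensional subspace, and then gets a contradiction from a block computation forcing a zero row in $S$; you instead prove the uniform strict bound $I - X_1^{(i)} > 0$ for every absolute extreme point --- a unit eigenvector of $X_1^{(i)}$ would be annihilated by $(T_1^{(i)})^*$, contradicting invertibility --- and conclude directly that no finite matrix convex combination can yield $I$ in that coordinate. Your version is more self-contained (it never invokes the geometry of level one and would apply verbatim to points that are not Euclidean-extreme there), while the paper's reduction shows exactly where ball-extremality enters. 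One further plus: your preliminary observation that finite-dimensional maximal tuples decompose into direct sums of absolute extreme points, because the conditions of Theorem \ref{thm:max_elements} pass to irreducible summands, is used silently in the paper but spelled out by you, which is the right level of care.
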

\begin{proof}
It suffices to prove the claims about absolute extreme points. Suppose an arbitrary matrix tuple $(T, X_1, \ldots, X_g)$ of $\mixed{1}{g}$ is given, so $TT^* + \sum\limits_{k=1}^g X_k^2 \leq I$. We may first approximate the tuple to arbitrary precision in order to assume that $TT^* + \sum\limits_{k=1}^g X_k^2 \leq (1 - \varepsilon) I$, after which we may approximate further to assume that $T$ is an invertible matrix. Next, write $TT^* + \sum\limits_{k=1}^g X_k^2 = I - AA^*$, and consider dilations of the form 
\[ (S, Y_1, \ldots, Y_g) = \left(\begin{bmatrix} T & A \\ B & C\end{bmatrix}, \begin{bmatrix} X_1 & 0 \\ 0 & D\end{bmatrix},  \begin{bmatrix} X_2 & 0 \\ 0 & 0 \end{bmatrix}, \ldots, \begin{bmatrix} X_g & 0 \\ 0 & 0 \end{bmatrix} \right).\]
Let $C$ be a positive invertible matrix, chosen small enough in norm that $B^* := -T^{-1} A C^*$ satisfies $BB^* + CC^* \leq I$. Since $T$ and $C$ are both invertible, it follows that 
\[ SS^* = \begin{bmatrix} TT^* + AA^* & TB^* + A C^* \\ BT^* + CA^* & BB^* + CC^* \end{bmatrix} = \begin{bmatrix} TT^* + AA^* & 0 \\ 0 & BB^* + CC^*\end{bmatrix}\] 
dominates a positive multiple of the identity and is therefore invertible. That is, the matrix $S$ is invertible. Setting $D := \sqrt{I - BB^* - CC^*}$, which is self-adjoint, gives that $SS^* + \sum\limits_{k=1}^g Y_k^2 = I$. It follows from Theorem \ref{thm:max_elements} that $(S, Y_1, \ldots, Y_k)$ is maximal, hence it is a direct sum of absolute extreme points. All together, we have that $\mixed{1}{g}$ is the closed matrix convex hull of its absolute extreme points.

On the other hand, consider $(0, \ldots, 0, 1) \in \mixed{1}{g}$, and suppose this point is a matrix convex combination of absolute extreme points $G^{(i)}$ of dimension $n_i$. Write
\begin{equation}\label{eq:siyivi} (0, \ldots, 0, 1) = \sum\limits_{i=1}^k V_i^* \, G^{(i)} \, V_i  \end{equation}
where the $V_i$ are $n_i \times 1$ matrices with $\sum\limits_{i=1}^k V_i^* V_i = 1 \in \mathbb{C}$. Now, each $V_i$ is just a contractive vector, so (\ref{eq:siyivi}) is a convex combination of vector states applied to $G^{(i)}$. Since the first level of $\mixed{1}{g}$ is a Euclidean ball, $(0, \ldots, 0, 1)$ is an extreme point of that set, so the above convex combination gives that $(0, \ldots, 0, 1)$ is the compression of a single absolute extreme point $G^{(i)}$ to a $1$-dimensional subspace. 

After a change of basis, we may write some absolute extreme point $(S, Y_1, \ldots, Y_g)$ of $\mixed{1}{g}$ as $S = \begin{bmatrix} 0 & \alpha \\ \beta & W \end{bmatrix}$, $Y_k = \begin{bmatrix} 0 & \gamma_k \\ \gamma_k^* & P_k \end{bmatrix}$ for $1 \leq k \leq d - 1$, and $Y_d = \begin{bmatrix} 1 & \gamma_d \\ \gamma_d^* & P_d \end{bmatrix}$. Since $SS^* + \sum\limits_{k=1}^g Y_k^2 = I$, the top left corner of the square sum implies that $\alpha = 0$ and $\gamma_k = 0$ for all $k$. However, this means that $S$ has a row of zeroes and is not invertible. Since $S$ acts on a finite-dimensional space, it is not injective. This contradicts Theorem \ref{thm:max_elements}, since every absolute extreme point is a maximal element.
\end{proof}

The results \cite[Theorem 6.8]{Kriel} and \cite[Theorem 1.10]{HL} imply that for any closed and bounded matrix convex set $\mathcal{C} = \cW(T)$ over $\bC^d$, the closure in the Webster-Winkler Krein-Milman theorem \cite[Theorem 4.3]{WW1999} is not necessary. That is, $\mathcal{C}$ is the matrix convex hull of its matrix extreme points, which are exactly the images of $T$ under pure matrix states. Since every pure UCP map of $\cS_T$ dilates to a boundary representation by \cite{DK13}, this implies that every point of $\mathcal{C}$ is a matrix convex combination of finite-dimensional compressions of expressions $\pi(T)$, where $\pi$ ranges over (perhaps infinite-dimensional) boundary representations. That is, the closure in Corollary \ref{cor:closure_needed_son} is needed only because the boundary representations being considered are finite-dimensional.

When $d \geq 2$, regardless of $g$, $\mixed{d}{g}$ follows a similar pattern as the row contractions. In particular, these are examples of complex free spectrahedra with no Krein-Milman theorem for absolute extreme points.

\begin{corollary} If $d \geq 2$ and $g \geq 0$, then $\mixed{d}{g}$ has no absolute extreme points. 
\end{corollary}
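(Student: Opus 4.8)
The plan is to combine the dictionary that identifies absolute extreme points as finite-dimensional, irreducible, maximal elements with the explicit characterization of maximal elements in Theorem \ref{thm:max_elements}, and then to force a contradiction by a simple dimension count. Since every absolute extreme point of $\mixed{d}{g}$ is in particular a \emph{finite-dimensional} maximal element, it suffices to show that $\mixed{d}{g}$ has no finite-dimensional maximal elements whatsoever when $d \geq 2$. The irreducibility built into the notion of absolute extreme point will not even be needed.

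First I would argue by contradiction: suppose $(T_1, \ldots, T_d, X_1, \ldots, X_g)$ is a maximal element acting on $\mathbb{C}^n$ for some finite $n \geq 1$. By conditions (\ref{cond:inj}) and (\ref{cond:ran}) of Theorem \ref{thm:max_elements}, each $T_i$ is injective and the ranges $\mathrm{ran}(T_i)$ are linearly independent subspaces. The key observation is that in finite dimensions injectivity of $T_i \colon \mathbb{C}^n \to \mathbb{C}^n$ forces $T_i$ to have full rank, so $\dim \mathrm{ran}(T_i) = n$ for every $i$. This is the one place where finiteness of the dimension is essential.

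Next I would recall that linear independence of the subspaces $\mathrm{ran}(T_1), \ldots, \mathrm{ran}(T_d)$ means exactly that their sum is direct, so $\dim\left(\sum_{i=1}^d \mathrm{ran}(T_i)\right) = \sum_{i=1}^d \dim \mathrm{ran}(T_i) = dn$. Since this sum is a subspace of $\mathbb{C}^n$, we get $dn \leq n$, hence $d \leq 1$, contradicting $d \geq 2$. Thus no finite-dimensional tuple can satisfy conditions (\ref{cond:inj}) and (\ref{cond:ran}) at once, so $\mixed{d}{g}$ has no finite-dimensional maximal elements, and therefore no absolute extreme points.

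There is essentially no hard step here; the only subtlety worth flagging is that finiteness of the dimension is precisely what produces the obstruction. Infinite-dimensional maximal elements do exist (for example the Cuntz isometries when $g = 0$), and these are exactly the maximal elements that a finite-dimensional analysis cannot capture. The proof isolates the tension between injectivity, which forces each range to fill the whole space, and linear independence of $d \geq 2$ such full-dimensional ranges inside a single copy of $\mathbb{C}^n$.
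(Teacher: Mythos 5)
Your proof is correct and follows essentially the same route as the paper: the paper's one-line argument is that two injective operators on a finite-dimensional space are automatically surjective, so their ranges cannot be linearly independent, which is exactly your dimension count $dn \leq n$ phrased without the explicit arithmetic. Your write-up is a slightly more detailed version of the same reduction via Theorem \ref{thm:max_elements}, so there is nothing to change.
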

\begin{proof}
Any two injective operators $T_1$ and $T_2$ on a finite-dimensional space are automatically surjective, and hence their ranges cannot be linearly independent.
\end{proof}

However, the $C^*$-envelope is not simple, unlike the Cuntz algebra.

\begin{corollary}
If $d \geq 0$ and $g \geq 1$, then the $C^*$-envelope of the operator system corresponding to $\mixed{d}{g}$ is not simple.
\end{corollary}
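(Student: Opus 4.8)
The plan is to exhibit a unital $*$-representation of the $C^*$-envelope $C^*_{\mathrm e}(\cS)$ — where $\cS$ is the operator system generated by the coordinate tuple of $\mixed{d}{g}$ — whose kernel is a proper nonzero closed two-sided ideal. Two facts drive the argument. First, since $\cS$ embeds completely isometrically into $C^*_{\mathrm e}(\cS)$, each self-adjoint coordinate is a nonzero element of $C^*_{\mathrm e}(\cS)$; in particular $X_1 \neq 0$. Second, by Theorem \ref{thm:max_elements} together with the dictionary between maximal elements and the unique extension property recalled in the introduction, any maximal tuple of $\mixed{d}{g}$ determines a UCP map whose unique extension to $C^*(\cS)$ is a $*$-representation that factors through $C^*_{\mathrm e}(\cS)$. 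Thus it suffices to produce a maximal tuple whose coordinate $X_1$ is sent to $0$: the resulting representation $\rho$ of $C^*_{\mathrm e}(\cS)$ is unital, so $\ker \rho$ avoids the unit and is proper, while $\rho(X_1) = 0 \neq X_1$ forces $\ker \rho \neq 0$, witnessing non-simplicity.

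For $d \geq 1$ I would take the tuple $(T_1, \ldots, T_d, 0, \ldots, 0)$ in which every self-adjoint coordinate vanishes and $(T_1, \ldots, T_d)$ is chosen so that $\sum_i T_i T_i^* = I$ with each $T_i$ injective and with linearly independent ranges. Concretely, for $d \geq 2$ one takes $T_1, \ldots, T_d$ to be Cuntz isometries, and for $d = 1$ one takes the scalar $T_1 = 1$. By Theorem \ref{thm:max_elements} this tuple is maximal in $\mixed{d}{g}$, since condition (\ref{cond:ss}) holds by construction and conditions (\ref{cond:inj}) and (\ref{cond:ran}) are exactly the defining properties of the chosen $T_i$. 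Hence it yields a representation $\rho$ of $C^*_{\mathrm e}(\cS)$ with $\rho(X_1) = 0$, completing this case.

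For $d = 0$ the set $\mixed{0}{g}$ is the self-adjoint matrix ball $\saball{g}$, and no maximal tuple can kill all coordinates, since $\sum_j X_j^2 = I$ must hold at a maximal point. Here I would instead use two distinct scalar points: any two distinct unit vectors $u \neq v$ in $\bR^g$ are one-dimensional tuples (hence maximal by Theorem \ref{thm:max_elements}, the range and injectivity conditions being vacuous), giving two distinct characters $\chi_u \neq \chi_v$ of $C^*_{\mathrm e}(\cS)$. Their existence forces $\dim C^*_{\mathrm e}(\cS) \geq 2$, so $\ker \chi_u$ is again a proper nonzero ideal.

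The delicate point is the case $d \geq 2$. The preceding corollary shows that $\mixed{d}{g}$ then has no absolute extreme points, i.e.\ no finite-dimensional boundary representations, so — unlike the cases $d \leq 1$ — no character is available and the representation $\rho$ above is genuinely infinite-dimensional (a Cuntz representation). The crux is therefore to confirm that such an infinite-dimensional maximal tuple really does descend to a representation of the $C^*$-envelope; this is precisely where the identification of maximal elements with maps having the unique extension property is invoked, and it is where the contrast with the simple Cuntz algebra obtained when $g = 0$ becomes transparent, since it is exactly the extra vanishing coordinate $X_1$ that manufactures the ideal.
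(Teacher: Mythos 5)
Your proof is correct and takes essentially the same route as the paper: both arguments use Theorem \ref{thm:max_elements} together with the maximal-element/unique-extension dictionary to produce representations of the $C^*$-envelope that fail to be faithful --- the paper observes that maximal tuples exist with $\|X_j\|$ equal to any value in $[0,1]$, while you realize the extreme instance $X_1 \mapsto 0$ concretely via Cuntz isometries (or $T_1 = 1$) padded with zero self-adjoint coordinates. The only cosmetic differences are that you spell out why such a representation factors through the envelope and treat $d = 0$ via a pair of distinct characters, a case the paper dismisses as trivial (note only that Theorem \ref{thm:max_elements} is stated for $d \geq 1$, so for $d = 0$ the maximality of scalar unit vectors should be checked directly, which is a one-line corner computation).
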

\begin{proof}
The case $d = 0$, $g \geq 1$ is trivial. Otherwise, realize the operator system as concretely spanned by operators $\mathcal{T}_i, \mathcal{X}_j$. The conditions in Theorem 2.8 allow (possibly infinite-dimensional) maximal elements $(T_i, X_j)$ such that $\|X_j\|$ can take any value between $0$ and $1$. However, a maximal element is the image of $(\mathcal{T}_i, \mathcal{X}_j)$ under a representation, so not all of these representations are norm-preserving.
\end{proof}


\section{Geometry of Absolute Extreme Points} \label{sec:specdetect} 

If $\mathcal{C}$ is a closed and bounded matrix convex set, then certain extreme points of $\mathcal{C}_1 = K$ are absolute extreme points of $\mathcal{C}$. In an extreme case, \cite[Proposition 6.1]{EHKM} implies that every real free spectrahedron, equivalently every complex free spectrahedron that is closed under complex conjugation, has the property that \textit{all} extreme points of the first level are actually absolute extreme. (Again, we caution the reader that these results require the free spectrahedron to be given in self-adjoint coordinates before discussing complex conjugation of the entries.) However, for sets that are not spectrahedra, the situation is quite different. The first level of the matrix range of $\left( \begin{bmatrix} 1 & 0 \\ 0 & -1 \end{bmatrix}, \begin{bmatrix} 0 & 1 \\ 1 & 0 \end{bmatrix} \right)$ is the unit disk (see \cite[(4.11)]{P_SSM} and the references discussed nearby), but none of the extreme points of the disk are absolute extreme for this matrix range. Namely, the anticommuting pair above is a nontrivial dilation of every point of the unit circle.

In the general setting, we seek a geometric method to pick out certain absolute extreme points that are contained in the first level. From \cite[Proposition 4.3]{PScomp}, we have that isolated extreme points of $K = \mathcal{C}_1$ are absolute extreme points of $\mathcal{C}$. More generally, this result applies to any $\lambda \in K$ that is the vertex of some polytope $P$ that contains $K$. In this section, we significantly generalize \cite[Proposition 4.3]{PScomp} and use the new results to detect when certain sets are not polar duals of spectrahedra. Our point of view differs from that of \cite{EHKM} in two ways. First, we focus on the polar dual of spectrahedra instead of the spectrahedra themselves, and second, we do not need to restrict to real coefficients in any way. We denote the polar dual of $\mathcal{C}$ by $\mathcal{C}^\circ$, which has each level determined by
\[ \mathcal{C}^\circ_n = \left\{ (A_1, \ldots, A_g) \in M_n(\mathbb{C})^g_{sa}: \, \text{for all } X \in \mathcal{C}, \, \sum\limits_{j=1}^g A_j \otimes X_j \leq I \right\}. \]
See \cite{HKM17} for an extensive study of the polar dual in the real coefficient setting. Note that our choice to recoordinatize tuples to contain self-adjoints in proofs is only for convenience, and once a self-adjoint presentation is specified, it does not matter if the matrices have real or complex coefficients.

Given a compact convex set $K \subseteq \bR^g$, there are minimal and maximal matrix convex sets over $K$, denoted $\Wmin{}(K)$ and $\Wmax{}(K)$, respectively \cite[\S 4]{DDSS}. The disparity between these two objects can be used to measure to what extent the first level of a matrix convex set $\mathcal{C}$, given as $\mathcal{C}_1 = K$, determines the properties of $\mathcal{C}$. Note that while the operations of $\Wmin{}$ and $\Wmax{}$ in \cite{DDSS} are direct analogues of $\text{OMIN}$ and $\text{OMAX}$ for operator systems \cite{OMIN_OMAX}, a focus on matrix convex sets opens comparison problems up to a geometric point of view. The only compact convex sets $K \subseteq \bR^g$ with $\Wmin{}(K) = \Wmax{}(K)$ are simplices: see \cite[Theorem 4.7]{FNT}, \cite[Theorem 4.1]{PSS}, \cite[Theorem 1]{Huber}, and \cite[Corollary 2]{Aubrun}, which are roughly in increasing order of generality.

We will use the notation $\operatorname{AEP}(\mathcal{C})$ for the absolute extreme points of $\mathcal{C}$, $\text{ext}(K)$ for the Euclidean extreme points of $K$, and $\mathcal{I}_K$ for the isolated Euclidean extreme points of $K$. Note that a set of the form $\Wmin{}(K)$ has the property that any extreme point of $K$ is an absolute extreme point. Much more generally, \cite[\S 6]{Kriel} implies that if $\mathcal{C}$ is generated by its $n$th level, then any matrix extreme point in level $n$ is an absolute extreme point. Motivated by \cite[Proposition 4.3]{PScomp}, we give the following definitions.

\begin{definition}\label{def:SB}
Let $K$ be a compact convex set in Euclidean space. Then $\lambda \in K$ is a \textit{simplex-bounded point of} $K$ if there exists a simplex $\Delta \supseteq K$ such that $\lambda$ is a vertex of $\Delta$. Equivalently, there exists a polytope $P \supseteq K$ such that $\lambda$ is a vertex of $P$. In this case, we write $\lambda \in \SB{K}$. 
\end{definition}

\begin{definition}\label{def:FEP}
Let $\mathcal{C}$ be a closed and bounded matrix convex set, with $\mathcal{C}_1 = K$. Then $\lambda \in K$ is a \textit{commuting extreme point of} $\mathcal{C}$ if there exists a compact convex set $L$ such that $\mathcal{C} \subseteq \Wmin{}(L)$ and $\lambda$ is an extreme point of $L$. In this case, we write $\lambda \in \FEP{\mathcal{C}}$.
\end{definition}

In this language, \cite[Proposition 4.3]{PScomp} and its proof give that the containments
\begin{equation}\label{eq:foursets} \mathcal{I}_K \subseteq \SB{K} \subseteq \FEP{\mathcal{C}} \subseteq \operatorname{AEP}(\mathcal{C}) \cap K \end{equation}
hold. Note that a priori, $\SB{K}$ only depends on the first level, but $\FEP{\mathcal{C}}$ takes other levels into account. Among all $\mathcal{C}$ with $\mathcal{C}_1 = K$, the smallest collection of commuting extreme points occurs when $\mathcal{C} = \Wmax{}(K)$. Further, if one considers multiple commuting extreme points, the choice of $L$ may vary. Namely, if $\lambda_1$ and $\lambda_2$ are commuting extreme points of $\mathcal{C}$, it is not expected that there must be a single $L$ such that $\mathcal{C} \subseteq \Wmin{}(L)$ and both $\lambda_1$ and $\lambda_2$ are extreme points of $L$.

It is of interest to us under what circumstances the containments in (\ref{eq:foursets}) are or are not equalities. We first show that all of these sets coincide when $\mathcal{C} = \mathcal{W}(A)$ for a matrix tuple $A$. Note that $\mathcal{C} = \mathcal{W}(A)$ is the bounded polar dual of a free spectrahedron, as in the natural adjustment of \cite[Theorem 4.6]{HKM17} to the complex setting (see \cite[Proposition 3.1 and Lemma 3.2]{DDSS}).

\begin{proposition}\label{prop:equalityforall}
Let $A$ be a tuple of matrices and set $\mathcal{C} = \cW(A)$, $K = \mathcal{C}_1$. Then 
\[ \mathcal{I}_K = \SB{K} = \FEP{\mathcal{C}}= \operatorname{AEP}(\mathcal{C}) \cap K.\]
\end{proposition}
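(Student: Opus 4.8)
The plan is to use the chain of containments in (\ref{eq:foursets}), namely $\mathcal{I}_K \subseteq \SB{K} \subseteq \FEP{\mathcal{C}} \subseteq \operatorname{AEP}(\mathcal{C}) \cap K$, so that proving the proposition reduces to the single reverse inclusion $\operatorname{AEP}(\mathcal{C}) \cap K \subseteq \mathcal{I}_K$. I would therefore fix $\lambda \in \operatorname{AEP}(\mathcal{C}) \cap K$ and show it is an isolated Euclidean extreme point of $K$, arguing entirely through dilations so that no assumption on the coefficient field is needed.

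First I would record two elementary consequences of absolute extremity. Writing $\lambda = \tfrac12 \mu_1 + \tfrac12 \mu_2$ for points $\mu_1, \mu_2 \in K$ and reading this as a scalar matrix convex combination forces $\lambda = \mu_1 = \mu_2$, so $\lambda \in \text{ext}(K)$. Since $K = \cW_1(A)$ is the convex hull of the (compact) joint numerical range of $A$, every extreme point of $K$ lies in the joint numerical range itself; hence there is a unit vector $v$ with $\langle A_j v, v \rangle = \lambda_j$ for all $j$. If $v$ were not a common eigenvector, then $A \in \cW(A)$ itself would be a nontrivial dilation of $\lambda$, contradicting absolute extremity. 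Thus the joint eigenspace $H_0 = \{ w : A_j w = \lambda_j w \text{ for all } j\}$ is nonzero.

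The key step is to compress away the \emph{entire} joint $\lambda$-eigenspace. Writing $A = \lambda I_{H_0} \oplus A'$, where $A'$ is the reducing restriction of $A$ to $H_0^\perp$, the associated compression is UCP, so $A' \in \cW(A)$ by matrix convexity, and by maximality of $H_0$ the tuple $A'$ has no common eigenvector with eigenvalue $\lambda$. I claim $\lambda \notin W(A')$, the numerical range of $A'$: otherwise some unit vector $w \in H_0^\perp$ realizes $\lambda$, and since $w$ cannot be a common eigenvector of $A'$, the tuple $A' \in \cW(A)$ would once more be a nontrivial dilation of $\lambda$, contradicting absolute extremity.

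Finally I would convert $\lambda \notin W(A')$ into isolation. Because $A = \lambda I_{H_0} \oplus A'$, the joint numerical range of $A$ is contained in $\conv(\{\lambda\} \cup W(A'))$, so $K = \conv(\{\lambda\} \cup W(A'))$; as $W(A')$ is compact and omits $\lambda$, we have $\delta := \dist(\lambda, W(A')) > 0$. Any extreme point of $K$ other than $\lambda$ must lie in the compact generating set $\{\lambda\} \cup W(A')$, hence in $W(A')$, hence at distance at least $\delta$ from $\lambda$; therefore $\lambda$ is isolated in $\text{ext}(K)$, i.e. $\lambda \in \mathcal{I}_K$. The main obstacle is the middle step: isolating the exact joint eigenspace and verifying that the surviving compression $A'$ simultaneously lies in $\cW(A)$ and furnishes a genuine nontrivial dilation precisely when $\lambda \in W(A')$. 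Once this dictionary between ``$\lambda$ is absolute extreme'' and ``$\lambda$ is separated from $W(A')$'' is established, the passage to isolation is immediate.
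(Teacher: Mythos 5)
Your proof is correct, but it takes a genuinely different route from the paper's. The paper handles the one nontrivial inclusion $\operatorname{AEP}(\mathcal{C}) \cap K \subseteq \mathcal{I}_K$ by citation: if $\lambda \in K$ is absolute extreme in $\cW(A)$, then by \cite[Corollary 3.8]{DP21} it is a \emph{crucial matrix extreme point} in the sense of \cite[Definition 2.4]{PScomp}, and such points are isolated among the Euclidean extreme points of $K$. You replace this citation with a self-contained dilation argument: absolute extremity (via the characterization \cite[Theorem 1.1 (3)]{EHKM} quoted in the paper's introduction) forces every unit vector realizing $\lambda$ in the joint numerical range to be a joint reducing eigenvector; splitting off the full joint eigenspace gives $A = \lambda I_{H_0} \oplus A'$ with $A' \in \cW(A)$ and $\lambda \notin W(A')$; then compactness of $W(A')$ yields a gap $\delta = \dist(\lambda, W(A')) > 0$, and Milman's partial converse ($\text{ext}(K) \subseteq \{\lambda\} \cup W(A')$ because $K = \conv(\{\lambda\} \cup W(A'))$) gives isolation. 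Your approach buys transparency: it shows exactly where finite-dimensionality of $A$ enters (compactness of the joint numerical range is used twice, and both uses fail for general operator tuples), and it frees the proposition from dependence on \cite{DP21}. The paper's approach buys brevity and situates the statement within the machinery of crucial/strongly peaking points developed there. One detail to tighten: for a non-self-adjoint tuple $A$, the space $H_0 = \{w : A_j w = \lambda_j w \text{ for all } j\}$ as you define it need not be reducing, so either define $H_0$ as the joint \emph{reducing} eigenspace (adding the conditions $A_j^* w = \overline{\lambda_j}\, w$, which triviality of the dilation does supply) or recoordinatize into self-adjoint coordinates first -- the paper notes explicitly that this recoordinatization is only bookkeeping.
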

\begin{proof}
It suffices to prove that $\operatorname{AEP}(\mathcal{C}) \cap K \subseteq \mathcal{I}_K$. If $\lambda \in K$ is an absolute extreme point of $\cW(A)$, then by \cite[Corollary 3.8]{DP21}, $\lambda$ is a crucial matrix extreme point of $\cW(A)$ in the sense of \cite[Definition 2.4]{PScomp}. This implies that $\lambda$ is isolated in the extreme points of $K$.
\end{proof}

Proposition \ref{prop:equalityforall} is easily applied to the identification of free spectrahedra, through the polar dual. If $\mathcal{C}$ admits an absolute extreme point in level one that is not isolated among the Euclidean extreme points of level one, then $\mathcal{C}$ is not the matrix range of a matrix tuple. Consequently, it is not the polar dual of a free spectrahedron.

The containment (\ref{eq:foursets}) also provides a different interpretation of results such as \cite[Example 7.22]{DDSS}, which shows that the shifted unit disk $K = (1,0) + \overline{\mathbb{D}}$ is not scalable. That is,
\[ \forall \, C \in (0, \infty), \hspace{.1 in} \Wmax{}(K) \not\subseteq C \cdot \Wmin{}(K). \]
This example was subsequently generalized in \cite[Theorem 5.6 and Corollary 5.7]{PSS} to a characterization of all containments $\Wmax{}(K_1) \subseteq \Wmin{}(K_2)$ where $K_1$ and $K_2$ are shifted and scaled closed Euclidean balls -- there always exists a simplex $\Delta$ with $K_1 \subseteq \Delta \subseteq K_2$. However, nonscalability of the shifted disk also follows from the next result, which uses a well-known fact about numerical ranges.

\begin{corollary}\label{cor:noFEPball}
No point of the Euclidean ball $\overline{\mathbb{B}^g_2}$ in dimension $g \geq 2$ is a commuting extreme point of $\Wmax{}(\overline{\mathbb{B}^g_2})$.
\end{corollary}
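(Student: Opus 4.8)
The plan is to produce one highly noncommutative tuple lying in $\Wmax{}(\overline{\mathbb{B}^g_2})$ whose membership in some $\Wmin{}(L)$ is incompatible with any sphere point being extreme in $L$. Interior points are immediate: if $\lambda$ is interior to $K = \overline{\mathbb{B}^g_2}$ and $K \subseteq L$, then $\lambda$ is interior to $L$, hence not extreme, so $\lambda \notin \FEP{\Wmax{}(\overline{\mathbb{B}^g_2})}$ by Definition \ref{def:FEP}. Thus it suffices to rule out a boundary point $\lambda$ with $\|\lambda\|_2 = 1$. The device is a tuple $E = (E_1, \dots, E_g)$ of anticommuting self-adjoint unitaries (Clifford generators), with $E_j = E_j^*$, $E_j^2 = I$, and $E_j E_k = -E_k E_j$ for $j \neq k$; these exist for every $g$. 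The relevant well-known fact about numerical ranges is that $(\sum_j a_j E_j)^2 = \|a\|_2^2\, I$, so $\|\sum_j a_j E_j\| = \|a\|_2$ and the joint numerical range of $E$ is exactly $\overline{\mathbb{B}^g_2}$; in particular $E \in \Wmax{}(\overline{\mathbb{B}^g_2})$, since this is precisely the defining inequality of the maximal set.

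Next I would assume toward a contradiction that $\lambda$, $\|\lambda\|_2 = 1$, is a commuting extreme point, witnessed by a compact convex $L$ with $\Wmax{}(\overline{\mathbb{B}^g_2}) \subseteq \Wmin{}(L)$ and $\lambda$ extreme in $L$. Then $E \in \Wmin{}(L)$, so $E$ dilates to a commuting self-adjoint (normal) tuple $N = (N_1, \dots, N_g)$ with joint spectrum in $L$, say $E_j = V^* N_j V$ for an isometry $V$. Choosing $v$ to be a unit $+1$-eigenvector of the self-adjoint unitary $\sum_j \lambda_j E_j$ (whose spectrum is $\{\pm 1\}$), the joint-numerical-range point $\mu = (\langle E_j v, v\rangle)_j$ satisfies $\langle \lambda, \mu \rangle = 1$ and $\|\mu\|_2 \leq 1$, so Cauchy--Schwarz forces $\mu = \lambda$. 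That is, the vector state at $v$ realizes the boundary point $\lambda$ exactly.

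The crux is to transport extremality through the dilation. With $\tilde v = V v$, the joint spectral measure $\mu_{\tilde v}$ of $N$ is a probability measure supported on the joint spectrum $\subseteq L$, with barycenter $(\langle N_j \tilde v, \tilde v\rangle)_j = (\langle E_j v, v\rangle)_j = \lambda$. Since $\lambda$ is an extreme point of $L$, the only representing measure is $\delta_\lambda$, so $\tilde v$ is a joint eigenvector of $N$ with $N_j \tilde v = \lambda_j \tilde v$. Compressing, $E_j v = V^* N_j \tilde v = \lambda_j v$ for all $j$, i.e.\ $v$ is a common eigenvector of every $E_j$. But anticommuting unitaries admit no common eigenvector: from $E_1 E_2 v = \lambda_1\lambda_2 v = E_2 E_1 v = -E_1 E_2 v$ we get $E_1 E_2 v = 0$, and $E_1 E_2$ is invertible, so $v = 0$, contradicting $\|v\| = 1$.

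I expect the extremality-to-eigenvector passage to be the main obstacle: it requires the dilation-theoretic description of $\Wmin{}(L)$ as compressions of commuting normal tuples with joint spectrum in $L$ (from \cite{DDSS}), together with the barycenter characterization of extreme points, to convert \emph{``$\lambda$ is extreme in $L$''} into \emph{``$\tilde v$ is a joint eigenvector.''} Once that is in hand, the Clifford relations supply the contradiction cheaply, and the reduction to boundary points (plus, if one prefers, the rotational symmetry of the ball used to normalize $\lambda$) completes the argument for all $g \geq 2$.
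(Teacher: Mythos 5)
Your argument is correct, but it takes a genuinely different logical route from the paper's, even though both hinge on the same witness: the tuple of anticommuting self-adjoint unitaries. The paper's proof is two lines long because it routes through absolute extreme points and the pre-established chain (\ref{eq:foursets}): every point of the sphere is a compression of the Clifford tuple $F$ to a non-reducing subspace, so no point of $\overline{\mathbb{B}^g_2}$ is an absolute extreme point of $\Wmax{}(\overline{\mathbb{B}^g_2})$, and then the containment $\FEP{\mathcal{C}} \subseteq \operatorname{AEP}(\mathcal{C}) \cap K$ (imported from \cite{PScomp}) finishes the argument. You never mention absolute extreme points at all: instead you unfold Definition \ref{def:FEP} directly, using the dilation-theoretic description of $\Wmin{}(L)$ from \cite{DDSS} (compressions of commuting self-adjoint tuples with joint spectrum in $L$), the barycentric characterization of extreme points applied to the joint spectral measure of the dilation, and the Clifford relations to kill the resulting common eigenvector. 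In effect you re-prove, in this special case, exactly the containment $\FEP{\mathcal{C}} \subseteq \operatorname{AEP}(\mathcal{C}) \cap K$ that the paper cites. The paper's route buys brevity and reuse of general machinery; your route buys a self-contained proof that makes explicit \emph{why} a commuting dilation cannot reproduce a sphere point --- the spectral measure must collapse to a point mass, forcing a joint eigenvector of anticommuting unitaries, which cannot exist. Two details to patch, both minor: (i) you should justify that $+1$ actually lies in the spectrum of $U = \sum_j \lambda_j E_j$, not merely that the spectrum sits in $\{\pm 1\}$; this follows since each $E_j$ satisfies $E_j = -E_k E_j E_k^{-1}$ for $k \neq j$, hence has trace zero, so $U$ is a trace-zero self-adjoint unitary and both eigenvalues occur. (ii) For membership $E \in \Wmax{}(\overline{\mathbb{B}^g_2})$ you only need that all state images of $E$ lie in the ball, which your identity $\bigl(\sum_j a_j E_j\bigr)^2 = \|a\|_2^2 I$ already gives; the exact equality of the joint numerical range with the ball is true but not needed.
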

\begin{proof}
Let $F = (F_1, \ldots, F_g)$ be the $g$-tuple of universal self-adjoint, anticommuting, unitary matrices. Then $\cW(F) \subseteq \Wmax{}(\overline{\mathbb{B}^g_2})$. Since any extreme point of the ball is a compression of $F$ to a non-reducing subspace, no $\lambda \in \overline{\mathbb{B}^g_2}$ is an absolute extreme point of $\Wmax{}(\overline{\mathbb{B}^g_2})$. By (\ref{eq:foursets}), $\operatorname{AEP}(\Wmax{}(\overline{\mathbb{B}^g_2})) \cap \overline{\mathbb{B}^g_2} = \varnothing$ implies that no $\lambda \in \overline{\mathbb{B}^g_2}$ is a commuting extreme point of $\Wmax{}(\overline{\mathbb{B}^g_2})$.
\end{proof}

This simple result recovers nonscalability of the tangential ball $K = \vec{v} + \overline{\mathbb{B}^g_2}$ where $g \geq 2$ and $||\vec{v}||_2 = 1$ immediately: if $\Wmax{}(K) \subseteq C \cdot \Wmin{}(K)$, then the extreme point $0$ of $K$ is still an extreme point of $C \cdot K$, and hence $0$ is a commuting extreme point of $K$ by Definition \ref{def:FEP}. However, shifting the set shows $-\vec{v}$ is a commuting extreme point of $\overline{\mathbb{B}^g_2}$, which is a contradiction. Note that Corollary \ref{cor:noFEPball} is a fundamentally distinct generalization of nonscalability than \cite[Theorem 5.6 and Corollary 5.7]{PSS}, in that neither result obviously implies the other. A quick look at the proof also shows that the argument can be replaced with a local version, as follows.

\begin{corollary}\label{cor:noFEPgen}
Let $K$ be a compact convex set, and suppose $\lambda \in K$ is such that there exists a closed Euclidean ball $B$ with $B \subseteq K$ and $\lambda \in B$. Then $\lambda$ is not an absolute extreme point of $\Wmax{}(K)$, hence it is not a commuting extreme point of $K$.
\end{corollary}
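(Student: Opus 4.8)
The plan is to run a \emph{local} version of the proof of Corollary~\ref{cor:noFEPball}: instead of the standard unit ball I would translate and rescale the universal Clifford tuple so that its matrix range sits inside $\Wmax{}(K)$, and then exhibit $\lambda$ as a nontrivial dilation target. I take the ambient dimension to be $g \geq 2$; for $g = 1$ a Euclidean ball is an interval, hence a simplex whose endpoints are genuinely absolute extreme, so the statement (as in Corollary~\ref{cor:noFEPball}) is meaningful only when $g \geq 2$.

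Write $B = c + r\,\overline{\mathbb{B}^g_2}$ with center $c$ and radius $r > 0$, and let $F = (F_1, \ldots, F_g)$ be the universal self-adjoint anticommuting unitaries used in Corollary~\ref{cor:noFEPball}. Set $\widetilde{F} = (c_1 I + r F_1, \ldots, c_g I + r F_g)$. Since $\cW_1(F) = \overline{\mathbb{B}^g_2}$, the first level of $\cW(\widetilde{F})$ is exactly $c + r\,\overline{\mathbb{B}^g_2} = B$. Now I would prove the containment $\cW(\widetilde{F}) \subseteq \Wmax{}(K)$ from two elementary facts: first, a matrix range always lies inside the maximal matrix convex set over its own first level, so $\cW(\widetilde{F}) \subseteq \Wmax{}(B)$; second, $\Wmax{}$ is monotone, since an affine functional nonnegative on $K$ is nonnegative on $B \subseteq K$, making the defining inequalities of $\Wmax{}(K)$ a subfamily of those of $\Wmax{}(B)$, whence $\Wmax{}(B) \subseteq \Wmax{}(K)$. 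Composing gives $\cW(\widetilde{F}) \subseteq \Wmax{}(K)$.

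To finish, fix any $\lambda \in B$ and write $\lambda = c + r w$ with $w \in \overline{\mathbb{B}^g_2}$. Choosing a unit vector $v$ with $w = (\langle F_1 v, v\rangle, \ldots, \langle F_g v, v\rangle)$ realizes $\lambda = (\langle \widetilde{F}_1 v, v\rangle, \ldots, \langle \widetilde{F}_g v, v\rangle)$ as the compression of $\widetilde{F}$ to the line $\mathbb{C}v$. This line is non-reducing: a joint eigenvector $v$ with $F_j v = \mu_j v$ would force $\mu_j \mu_k = -\mu_j \mu_k$ for $j \neq k$ by anticommutation, i.e. $\mu_j \mu_k = 0$, contradicting $|\mu_j| = |\mu_k| = 1$ once $g \geq 2$. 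Hence the compression is a nontrivial dilation, and since $\widetilde{F} \in \Wmax{}(K)$, the point $\lambda$ admits a nontrivial dilation staying in $\Wmax{}(K)$; therefore $\lambda \notin \operatorname{AEP}(\Wmax{}(K))$. The last clause follows at once from the chain (\ref{eq:foursets}), which gives $\FEP{\Wmax{}(K)} \subseteq \operatorname{AEP}(\Wmax{}(K)) \cap K$, so $\lambda$ is not a commuting extreme point.

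The only real content beyond Corollary~\ref{cor:noFEPball} is the bookkeeping in the middle step — confirming $\cW(\widetilde{F}) \subseteq \Wmax{}(K)$ through $\Wmax{}(B)$ — together with the remark that the non-reducing compression argument applies uniformly to \emph{every} point of $B$, not just its boundary, since the absence of a common eigenvector makes each one-dimensional compression of $\widetilde{F}$ nontrivial. I expect this uniform statement to be the main subtlety, as it is what lets a ball in the \emph{interior} of $K$ still obstruct absolute extremity.
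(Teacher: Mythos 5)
Your overall skeleton is exactly the paper's intended argument: the paper offers no separate proof of this corollary, only the remark that the proof of Corollary \ref{cor:noFEPball} localizes, and your localization is the right one --- set $\widetilde{F} = c + rF$, observe $\cW(\widetilde{F}) \subseteq \Wmax{}(\cW_1(\widetilde{F})) = \Wmax{}(B) \subseteq \Wmax{}(K)$ by monotonicity of $\Wmax{}$, and use non-reducing one-dimensional compressions of $\widetilde{F}$ together with (\ref{eq:foursets}). Your exclusion of $g = 1$ is also a correct and necessary reading of the statement (for $K = [0,1]$, $B = [0,1/2]$, $\lambda = 0$ the conclusion is false).

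However, there is a genuine gap at the step you yourself flag as the main point: the claim that \emph{every} $w$ in the closed ball, not just every $w$ on the sphere, can be written as $(\langle F_1 v, v\rangle, \ldots, \langle F_g v, v\rangle)$ for a unit vector $v$. For $\|w\|_2 = 1$ this is standard: take $v$ a unit eigenvector of $\sum_j w_j F_j$ (a self-adjoint unitary) for the eigenvalue $1$; Cauchy--Schwarz then forces $\langle F_j v, v\rangle = w_j$. But for $\|w\|_2 < 1$ the claim is a statement about the \emph{joint numerical range} of the anticommuting tuple being the full ball rather than the sphere, and this is sensitive to which concrete tuple one calls universal. For $g = 3$ a single irreducible tuple of anticommuting self-adjoint unitaries is the Pauli triple, whose set of vector-state images is exactly the Bloch sphere $S^2$, not the ball; one only recovers the full ball by using a tuple containing two inequivalent irreducible summands (or, e.g., $F \oplus (-F)$), so that interior points arise as chords between sphere points in different blocks. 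Your proposal asserts the full-ball statement in a single clause with no argument, so as written the proof of non-extremity fails for interior points of $B$.

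The gap is easily repaired, and the repair is what the paper's own argument implicitly does: split on the position of $\lambda$ in $B$. If $\lambda$ lies in the interior of $B$, then $\lambda$ is the midpoint of a nondegenerate segment contained in $B \subseteq K$, hence is not an extreme point of $K = \left(\Wmax{}(K)\right)_1$; and a point of the first level that is not extreme there cannot be absolute extreme, since writing $\lambda = \sum_i t_i x_i$ as a proper convex combination of points $x_i \in K$ is a matrix convex combination with nonzero (scalar) $V_i$, and absolute extremity would force each $x_i = \lambda$. No Clifford tuple is needed in this case. If instead $\lambda$ lies on the sphere of $B$, then $\lambda = c + rw$ with $\|w\|_2 = 1$, the eigenvector argument above realizes $\lambda$ as the compression of $\widetilde{F}$ to the non-reducing line $\mathbb{C}v$ (your no-common-eigenvector computation is correct here), and since $\widetilde{F} \in \Wmax{}(K)$ and $\lambda$ is not unitarily equivalent to a direct summand of $\widetilde{F}$, the point $\lambda$ is not in $\operatorname{AEP}(\Wmax{}(K))$. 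The final passage to commuting extreme points via $\FEP{\Wmax{}(K)} \subseteq \operatorname{AEP}(\Wmax{}(K)) \cap K$ is correct as you stated it.
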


Corollary \ref{cor:noFEPgen} applies in particular to $\ell^p$-balls for $p \geq 2$, but not for $p < 2$. So, we use $\ell^p$ behavior for $p < 2$ to shed light on the containments in (\ref{eq:foursets}). To this end, we define the compact convex set $K_p \subseteq \mathbb{R}^2$ as
\begin{equation} K_p := \{(x, y) \in \mathbb{R}^2: |x|^p \leq y \leq 1 \}. \end{equation}
If $1 < p < 2$, then the extreme point $(0, 0)$ of $K_p$ is not simplex-bounded, but Corollary \ref{cor:noFEPgen} does not apply, so it is not immediately obvious if it is an absolute extreme point. 

\begin{example}\label{ex:nonSBAEP}
Fix $1 < p < 2$. Then $(0, 0) \in K_p$ is an absolute extreme point of $\Wmax{}(K_p)$ that is not a simplex-bounded point of $K_p$.
\end{example}
\begin{proof}
It is immediate that $(0, 0)$ is not simplex-bounded since $F(x) := |x|^p$ has $F^\prime(0) = 0$, so we need only show that $(0, 0)$ is an absolute extreme point of $\Wmax{}(K_p)$. Consider an arbitrary $2\times2$ dilation
\[ (X, Y) = \left( \begin{bmatrix} 0 & a \\ \overline{a} & b \end{bmatrix}, \begin{bmatrix} 0 & c \\ \overline{c} & d \end{bmatrix} \right) \in \Wmax{}(K_p). \] 
By definition, the numerical range of $(X, Y)$ is in $K_p$, so $Y \geq 0$. This immediately gives that $c = 0$ and $d \geq 0$, so
\[ (X, Y) = \left( \begin{bmatrix} 0 & a \\ \overline{a} & b \end{bmatrix}, \begin{bmatrix} 0 & 0 \\ 0 & d \end{bmatrix} \right).\] 
Further, for any unit vector $v = (v_1, v_2) \in \bC^2$, it holds that
\bes
\left| \langle Xv, v \rangle \right|^{p} \leq \langle Yv, v \rangle \leq 1,
\ees
and in particular
\bes
| 2  \operatorname{Re}(a v_2 \overline{v_1}) + b \, |v_2|^2|^{p} \leq d \, |v_2|^2.
\ees
Choosing the arguments of $v_1$ and $v_2$ carefully, we have that for any $t \in (0,1)$,
\bes
\left| 2 |a| t \sqrt{1-t^2} + b \, t^2 \right|^{p} \leq d t^2.
\ees
The equivalent expression
\bes
\left| 2 |a| t^{1-2/p} \sqrt{1-t^2} + b \, t^{2-2/p} \right|^{p} \leq d
\ees
holds, so the left hand side is bounded as $t$ decreases a zero. This causes a contradiction if $a \not= 0$, since $1 - 2/p < 0$ and $2 - 2/p > 0$. We conclude that $a = 0$, and hence the arbitrary dilation $(X, Y)$ was trivial. It follows from \cite[Theorem 1.1]{EHKM} that $(0, 0)$ is an absolute extreme point of $\Wmax{}(K)$.
\end{proof}

To generalize the example, we will need some preparation. Given a convex body $K$ in real or complex Euclidean space and an extreme point $\lambda$ of $K$, recoordinatize $K$ so that $\lambda = \vec{0} \in K$ and $K \subseteq \bR^{g-1} \times [0, \infty)$. Since $K$ is a convex body, we also insist that the set
\begin{equation} D := \{ \vec{x} \in \bR^{g-1}: \exists y \geq 0, (\vec{x}, y) \in K\} \end{equation}
has $\vec{0} \in \mathbb{R}^{g-1}$ in the interior. If all of these conditions are satisfied, we say that $K$ has been put in a (non-unique) \textit{standard position} around $\lambda$.

If $K$ is in a standard position around the extreme point $\vec{0}$, define the function $F: D \to [0, +\infty)$ by
\begin{equation}\label{eq:payrespects} F(\vec{x}) = \min\{y \geq 0:  (\vec{x}, y) \in K\}, \end{equation}
that is,
\begin{equation}\label{eq:simpler} (\vec{x}, y) \in K \,\,\,\, \implies \,\,\,\, F(\vec{x}) \leq y. \end{equation}
We will use the decay rate of $F(\vec{x})$ as $\vec{x}$ approaches $\vec{0}$ to generalize Example \ref{ex:nonSBAEP}. Note that placing $\mathcal{C}_1 = K$ in standard form means we have recoordinatized $K$ into real coordinates, and hence broken up the matrices of $\mathcal{C}$ into self-adjoint coordinates, but this difference is only bookkeeping.

\begin{theorem}\label{thm:subquadratic}
Let $K$ be a convex body in Euclidean space, with $\lambda \in K$ an extreme point, and put $K$ in a standard position around $\lambda$. If $\lim\limits_{\vec{x} \to 0} \, \cfrac{F(\vec{x})}{||\vec{x}||^2} = +\infty$, then $\lambda$ is an absolute extreme point of any closed and bounded matrix convex set $\mathcal{C}$ with $\mathcal{C}_1 = K$.
\end{theorem}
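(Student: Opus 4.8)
The plan is to prove that $\lambda$ admits no nontrivial dilation inside $\mathcal{C}$: since $\lambda\in\mathcal{C}_1=K$ is a $1$-dimensional, hence irreducible, point, \cite[Theorem 1.1]{EHKM} then identifies it as an absolute extreme point, just as in the proof of Example~\ref{ex:nonSBAEP}. Throughout I work in the given standard position, so that $\lambda=\vec 0$ and the $g$th coordinate is the distinguished ``height'' $y\geq 0$. The one structural fact I need is that $\mathcal{C}\subseteq\Wmax{}(K)$: because $\mathcal{C}$ is matrix convex with $\mathcal{C}_1=K$, compressing any tuple of $\mathcal{C}$ to a unit vector is a UCP map into $M_1(\bC)$, so the joint numerical range of every element of $\mathcal{C}$ lies in $K$.

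First I would take an arbitrary dilation of $\lambda$ to dimension $n$ and write it in block form with $\lambda$ in the top-left corner,
\[ \tilde X_j = \begin{bmatrix} 0 & b_j^* \\ b_j & M_j \end{bmatrix}, \qquad j=1,\dots,g, \]
with $b_j\in\bC^{n-1}$ and $M_j=M_j^*$, aiming to show every $b_j=0$ (which makes the dilation a direct sum, hence trivial). The height coordinate is immediate: since the numerical range lies in $K\subseteq\bR^{g-1}\times[0,\infty)$ we have $\tilde X_g\geq 0$, and a positive semidefinite matrix with a zero diagonal entry has the corresponding row and column equal to zero, so $b_g=0$ and $\tilde X_g=0\oplus D$ with $D\geq 0$.

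For the base coordinates I would probe with the one-parameter family of unit vectors $v=te^{i\phi}e_1+s\,w$, where $e_1$ spans the copy of $\lambda$, $w$ is a fixed unit vector in the complementary block, $t^2+s^2=1$, and $\phi$ is a phase. Setting $\vec\beta(w)=(\langle b_1,w\rangle,\dots,\langle b_{g-1},w\rangle)$, $\vec\mu(w)=(\langle M_1 w,w\rangle,\dots,\langle M_{g-1}w,w\rangle)$, and $\delta(w)=\langle Dw,w\rangle$, a direct computation gives the numerical-range point $\big(2ts\,\operatorname{Re}(e^{i\phi}\vec\beta(w))+s^2\vec\mu(w),\ s^2\delta(w)\big)\in K$, so (\ref{eq:simpler}) yields
\[ F\!\left( 2ts\,\operatorname{Re}(e^{i\phi}\vec\beta(w)) + s^2\vec\mu(w) \right) \leq s^2\,\delta(w). \]
If $\vec\beta(w)\neq 0$ for some $w$, I choose $\phi$ so that the real vector $\vec\rho:=\operatorname{Re}(e^{i\phi}\vec\beta(w))$ is nonzero and let $s\to 0^+$. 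Then $t\to1$, the argument of $F$ tends to $\vec 0$ with norm asymptotic to $2\|\vec\rho\|\,s$, and the hypothesis $F(\vec x)/\|\vec x\|^2\to+\infty$ forces $F(\text{argument})/s^2\to+\infty$, contradicting the bound $F(\text{argument})\leq s^2\delta(w)$. Hence $\vec\beta(w)=0$ for every unit $w$, i.e.\ every $b_j=0$, and the dilation is trivial.

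The crux --- and the only point where the hypothesis is used --- is the asymptotic step. I must verify that the linear term $2ts\,\vec\rho$ dominates the quadratic term $s^2\vec\mu(w)$, so that the argument of $F$ has norm of order $s$ rather than $s^2$; this is what lets $F(\vec x)/\|\vec x\|^2\to+\infty$ collide with the purely quadratic right-hand side $s^2\delta(w)$. Geometrically, a dilation would push the numerical-range point off $\lambda$ at linear rate $s$ in the base directions but only quadratic rate $s^2$ in height, and the faster-than-quadratic growth of $F$ at the vertex rules this out unless $\vec\rho$ vanishes. Everything else --- the block computation, the reduction $\mathcal{C}\subseteq\Wmax{}(K)$, and the final appeal to \cite[Theorem 1.1]{EHKM} --- runs exactly parallel to Example~\ref{ex:nonSBAEP}.
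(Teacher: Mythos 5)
Your proof is correct and follows essentially the same route as the paper's: positivity of the height coordinate kills its off-diagonal entries, and then vector states probed along a direction witnessing a nonzero off-diagonal entry (with a suitably chosen phase) make the base coordinates decay linearly in the parameter while the height decays quadratically, which collides with the hypothesis $F(\vec{x})/\|\vec{x}\|^2 \to +\infty$. The only differences are cosmetic: the paper argues the contrapositive and first reduces to a $2\times 2$ dilation, whereas you run the same asymptotic argument directly on an $n\times n$ dilation restricted to $\operatorname{span}\{e_1, w\}$ and conclude by contradiction.
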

\begin{proof}
We prove the contrapositive. Suppose that $\lambda = \vec{0}$ is not an absolute extreme point, so there is a nontrivial $2\times2$ dilation
\[ (X_1, \ldots, X_{g-1}, Y) = \left( \begin{bmatrix} 0 & a_1 \\ \overline{a_1} & b_1 \end{bmatrix}, \ldots, \begin{bmatrix} 0 & a_{g-1} \\ \overline{a_{g-1}} & b_{d-1} \end{bmatrix}, \begin{bmatrix} 0 & \alpha \\ \overline{\alpha} & \beta \end{bmatrix} \right) \in \mathcal{C}_2.\] 
We note that using standard form has recoordinatized $\mathcal{C}$ so that each $X_i$ is self-adjoint and $Y \geq 0$, which immediately gives that $\alpha = 0$ and $\beta \geq 0$. The dilation is by assumption nontrivial, so at least one $a_i$ is nonzero, and WLOG we assume $a_1 \not= 0$.
If $v = (v_1, v_2)$ is an arbitrary unit vector, $v_2 \not= 0$, then the image of $(X_1, \ldots, X_{g-1}, Y)$ under this vector state belongs to $K$, and hence (\ref{eq:simpler}) gives that
\begin{equation}\label{eq:vecstate}
F(\langle X_1 v, v \rangle, \ldots, \langle X_{g-1} v, v \rangle) \leq \langle Y v, v \rangle. 
\end{equation}
Set $v_2 = t \in (0, 1]$, and choose $v_1$ such that 
\[ \langle X_1 v, v \rangle = 2 \operatorname{Re}(a_1 v_2 \overline{v_1}) + b_1 |v_2|^2 = 2 |a_1| t \sqrt{1 - t^2} + b_1 t^2. \]
If $\vec{x}(t) := (\langle X_1 v, v \rangle, \ldots, \langle X_{g-1} v, v \rangle) \in D$, then (\ref{eq:vecstate}) implies that $\cfrac{F(\vec{x}(t))}{t^2} \leq \beta$. 

Since $a_1 \not= 0$, we have that
\bes
\limsup_{t \to 0^+} \frac{t^2}{||\vec{x}(t)||^2} \leq \limsup_{t \to 0^+} \frac{t^2}{|x_1(t)|^2} = \limsup_{t \to 0^+} \frac{t^2}{\left| 2 |a_1| t \sqrt{1 - t^2} + b_1 t^2 \right|^2} = \frac{1}{4|a_1|^2},
\ees
so we conclude
\be
\limsup_{t \to 0^+} \frac{F(\vec{x}(t))}{||\vec{x}(t)||^2} \leq \frac{\beta}{4|a_1|^2}.
\ee
Finally, there is a path of points $\vec{x} = \vec{x}(t)$ approaching $0$ such that $\cfrac{F(\vec{x})}{||\vec{x}||^2}$ remains bounded, and $\liminf\limits_{\vec{x} \to \vec{0}} \cfrac{F(\vec{x})}{||\vec{x}||^2} < +\infty$.
\end{proof}

Note that the hypothesis of Theorem \ref{thm:subquadratic} implies that $x_g = 0$ is a separating hyperplane for $\lambda = \vec{0}$, so $\lambda$ is actually an exposed point. Theorem \ref{thm:subquadratic} improves \cite[Proposition 4.3]{PScomp} in that we may determine some $\lambda \in K$ is an absolute extreme point of any matrix convex set over $K$ using a weaker geometric assumption. However, we do not reach any information on whether $\lambda$ is a commuting extreme point, which was implicit in \cite[Proposition 4.3]{PScomp}. Combining Theorem \ref{thm:subquadratic} and Proposition \ref{prop:equalityforall} also improves the previous discussion about identifying free spectrahedra. If an exposed point $\lambda$ of $K$ is not isolated extreme, but the defining function of $K$ near $\lambda$ decays more slowly than the norm squared, then $K$ is not the polar dual of a spectrahedron. Our next result concerns the opposite type of behavior: if $\mathcal{C} = \cW(A)$, where $A$ is a tuple of matrices, then no defining function $F(\vec{x})$ near an exposed point decays strictly \textit{faster} than the norm squared.

\begin{theorem}\label{thm:superquadratic}
Let $K$ be a convex body in Euclidean space, and let $\lambda$ be an exposed point of $K$ that is separated by a hyperplane $Q$. If $K = \cW_1(A)$ for some matrix tuple $A$, then $K$ is contained in a paraboloid with vertex $\lambda$ that opens away from $Q$. That is, when $K$ is put in standard position around $\lambda$ with separating hyperplane $x_g = 0$, there is some $M > 0$ with $F(\vec{x}) \geq M \|x\|^2$ for all $\vec{x} \in D$.
\end{theorem}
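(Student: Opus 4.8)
The plan is to recast $K = \cW_1(A)$ as a joint numerical range and reduce the paraboloid containment to a pointwise quadratic estimate on vector states. After putting $K$ in standard position around $\lambda$, I recoordinatize the self-adjoint tuple $A = (A_1, \ldots, A_g)$ of $k \times k$ matrices by a translation and rotation so that $\lambda = \vec 0$, $K \subseteq \bR^{g-1} \times [0,\infty)$, and the exposing hyperplane is $x_g = 0$. Level one of the matrix range is then
\[ K = \cW_1(A) = \operatorname{conv}\left\{ \left( \langle A_1\xi,\xi\rangle, \ldots, \langle A_g\xi,\xi\rangle \right) : \xi \in \bC^k, \ \|\xi\| = 1 \right\}, \]
since mixed-state values (states on $M_k(\bC)$) are convex combinations of vector-state values. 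Because the target region $R = \{(\vec x, y) : y \geq M\|\vec x\|^2\}$ is the epigraph of a convex function and hence convex, it suffices to verify that every vector-state point lies in $R$; taking convex hulls then yields $K \subseteq R$, which is exactly $F(\vec x) \geq M\|\vec x\|^2$ on $D$.

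Two structural consequences of standard position drive the estimate. First, $K \subseteq \{x_g \geq 0\}$ forces $A_g \geq 0$. Second, because $x_g = 0$ \emph{exposes} $\lambda$ (not merely supports it), any state $\tau$ with $\tau(A_g) = 0$ must satisfy $\tau(A_j) = 0$ for all $j < g$. Applying this to vector states supported on $N := \ker A_g$ and using self-adjointness shows $\langle A_j u, u\rangle = 0$ for every $u \in N$, whence the compression $P A_j P = 0$, where $P$ is the orthogonal projection onto $N$. I would then split $\bC^k = N \oplus N^\perp$ and write, in this decomposition,
\[ A_g = \begin{bmatrix} 0 & 0 \\ 0 & A_g' \end{bmatrix}, \qquad A_j = \begin{bmatrix} 0 & B_j \\ B_j^* & C_j \end{bmatrix} \quad (j < g), \]
where $A_g' > 0$ is positive definite on the finite-dimensional space $N^\perp$ (nonzero, else $K = \{\vec 0\}$ is not a body); let $c > 0$ be its least eigenvalue.

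For a unit vector $\xi = (u, w)$ with $u \in N$, $w \in N^\perp$, the last coordinate satisfies $\langle A_g\xi,\xi\rangle = \langle A_g' w, w\rangle \geq c\|w\|^2$, while $\langle A_j\xi,\xi\rangle = 2\operatorname{Re}\langle B_j w, u\rangle + \langle C_j w, w\rangle$ obeys $|\langle A_j\xi,\xi\rangle| \leq (2\|B_j\| + \|C_j\|)\|w\| =: L_j\|w\|$, using $\|u\|, \|w\| \leq 1$. Setting $L = \sum_{j<g} L_j^2$, which is positive since $\vec 0$ is interior to $D$ (ruling out $A_j = 0$ for all $j < g$), I obtain $\sum_{j<g}\langle A_j\xi,\xi\rangle^2 \leq L\|w\|^2$, and therefore $\langle A_g\xi,\xi\rangle \geq c\|w\|^2 \geq (c/L)\sum_{j<g}\langle A_j\xi,\xi\rangle^2$. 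Thus $M = c/L$ places each vector-state point in $R$, completing the argument.

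The step I expect to require the most care is the passage from the exposed-hyperplane hypothesis to the vanishing compression $PA_jP = 0$: it is precisely here that \emph{exposed} rather than merely supporting is used. The resulting vanishing top-left corner of each $A_j$ is exactly what forces the off-diagonal terms to scale like $\|w\|$ while $\langle A_g\xi,\xi\rangle$ scales like $\|w\|^2$, producing the quadratic (rather than higher-order) lower bound; without it, no uniform $M$ would be available.
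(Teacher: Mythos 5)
Your proposal is correct and follows essentially the same route as the paper's proof: standard position gives $A_g \geq 0$, exposedness forces the common zero block of each $A_j$ over $\ker A_g$, the vector-state estimate yields $x_g \geq M \sum_{i<g} x_i^2$ with the same constant $M = c/L$, and convexity of the paraboloid passes from the joint numerical range to its convex hull $K$. The only difference is cosmetic: the paper produces the block form by a simultaneous unitary conjugation rather than by splitting along $\ker A_g$, and it leaves implicit the polarization step and the positivity of $L$ that you spell out.
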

\begin{proof}
Put $K$ in standard position with respect to $\lambda$, so that the separating hyperplane $Q$ corresponds to $x_g = 0$. This implies that the function $F$ of (\ref{eq:payrespects}) is such that for $\vec{x} \not= \vec{0}$, $F(\vec{x}) > 0$. 

Suppose $\cW_1(A) = K$ for a matrix tuple $A$. Then the final matrix $A_g$ is positive semidefinite, since $K$ is in standard form. Note that $A_g$ is not the zero matrix, as $A_g = 0$ would imply that every point of $K$ has $x_g = 0$. Similarly, $A_g$ is not positive definite, since $\vec{0} \in \cW_1(A)$. Apply a simultaneous unitary conjugation to $A$ so that $A_g = \begin{bmatrix} 0 & 0 \\ 0 & D \end{bmatrix}$, where $D \geq \varepsilon I$ is positive definite.

Since $\lambda = \vec{0}$ is exposed in $K$ and separated by the hyperplane $Q: x_g = 0$, any state that maps $A_g \mapsto 0$ must map $A_i \mapsto 0$ for all $i$. Therefore, since the unitary conjugation  above has produced a zero block corner for $A_g$, the other $A_i$ must have zero blocks in the same place. That is,
\bes
A_i = \begin{bmatrix} 0 & B_i \\ B_i^* & C_i \end{bmatrix} ,\hspace{.1 in} 1 \leq i \leq g - 1, \hspace{.75 in} A_g = \begin{bmatrix} 0 & 0 \\ 0 & D \end{bmatrix}.
\ees

Applying an arbitrary vector state corresponding to $(v_1, v_2)$, $\|v_1\|^2 + \|v_2\|^2 = 1$, we obtain a point $x = (x_1, \ldots, x_g) \in K$ where
\be\label{eq:vecpoint}
x_i = 2 \, \text{Re}\left( \langle B_i v_2, v_1 \rangle \right) + \langle C_i \, v_2, v_2 \rangle, \hspace{.1 in} 1 \leq i \leq g - 1, \hspace{.3 in} x_g = \langle D \, v_2, v_2 \rangle.
\ee
Since $x_g = \langle D \, v_2, v_2 \rangle$ and $D \geq \varepsilon I$, we have that $x_g \geq \varepsilon \|v_2\|^2$, equivalently $||v_2|| \leq \cfrac{\sqrt{x_g}}{\sqrt{\varepsilon}}$. It is also trivial that $||v_1|| \leq 1$ and $||v_2|| \leq 1$, so we may bound the other coordinates $x_i$, $1 \leq i \leq g - 1$, by the estimate
\bes \ba
|x_i| 	&= \left| 2 \, \text{Re}\left(\langle B_i v_2, v_1 \rangle \right) + \langle C_i \, v_2, v_2 \rangle \right| \\
      	&\leq 2 \, ||B_i|| \, ||v_2|| \, ||v_1|| + ||C_i|| \, ||v_2||^2 \\
	&\leq (2 ||B_i|| + ||C_i||) \, ||v_2|| \\
	&\leq \frac{2 ||B_i|| + ||C_i||}{\sqrt{\varepsilon}} \cdot \sqrt{x_g}.
\ea \ees
Letting $M^{-1} = \sum\limits_{i=1}^{g-1} \cfrac{ (2 ||B_i|| + ||C_i||)^2}{\varepsilon}$ shows that any image $(x_1, \ldots, x_g)$ of $A$ from a vector state satisfies $\sum\limits_{i=1}^{g-1} x_i^2 \leq M^{-1} x_g$, or rather $x_g \geq M \sum\limits_{i=1}^{g-1} x_i^2$. Since the inequality given determines a paraboloid, which is convex, it follows that $\mathcal{W}_1(A) = K$ is contained in the same set. In other words, we have $F(\vec{x}) \geq M ||\vec{x}||^2$.
\end{proof}

Since the unit disk is $\cW_1(A)$ for a pair $A$ of $2 \times 2$ matrices, the convex hull of two disks is $\cW_1(A \oplus B)$ for two pairs $A$ and $B$ of $2 \times 2$ matrices. This set has an extreme point that is not exposed. On the other hand, the vertex of any paraboloid is certainly an exposed point, so Theorem \ref{thm:superquadratic} cannot have its hypothesis weakened to apply to general extreme points.

\begin{example}
Let $\overline{\mathbb{B}^g_p}$ denote the closed real $\ell_p$ ball in dimension $g \geq 2$. By \cite[Theorems 2.2 and 3.1]{HV}, if $p \not\in \{1, 2, \infty\}$, then $\overline{\mathbb{B}^g_p}$ cannot be a spectrahedron (see also \cite[Example 2]{HV}). For most values of $p$, $\overline{\mathbb{B}^g_p}$ is not an algebraic interior: there is no polynomial $P(x)$ such that a connected component of $\{x: P(x) > 0\}$ has closure equal to $\overline{\mathbb{B}^g_p}$. In all other cases, the region fails the rigid convexity condition defined in \cite[\S 3.1]{HV}, as a generic line through the origin will not cross the unit ball the necessary number of times for $\overline{\mathbb{B}^g_p}$ to be defined by a linear matrix inequality. We note that the failure of $\overline{\mathbb{B}^g_p}$ to be a spectrahedron may also be seen through local information in the polar dual, as outlined below. 

If $1 < p < \infty$ with $p \not= 2$, and we assume $\overline{\mathbb{B}^g_p} = D_A(1)$ for some $A \in M_n(\bC)^g$, then the polar dual and \cite[Proposition 3.1 and Lemma 3.2]{DDSS} show $\overline{\mathbb{B}^g_q} = \cW_1(A)$, where $1/p + 1/q = 1$. If $1 < q < 2$, then the point $(1, 0, \ldots, 0)$ of $\overline{\mathbb{B}^g_q}$ is an absolute extreme point of $\mathcal{W}(A)$ by Theorem \ref{thm:subquadratic}, but it is not an isolated extreme point of $\overline{\mathbb{B}^g_q}$, which contradicts Proposition \ref{prop:equalityforall}. On the other hand, if $2 < q < \infty$, then $(1, 0, \ldots, 0)$ is exposed in $\overline{\mathbb{B}_q^g}$, but there is no paraboloid containing $\overline{\mathbb{B}_q^g}$ for which $(1, 0, \ldots, 0)$ is the vertex, which contradicts Theorem \ref{thm:superquadratic}.
\end{example}

We close with some further discussion of (\ref{eq:foursets}). We know that for matrix ranges of matrix tuples, the four sets coincide, and in the general case we have found an absolute extreme point in level one that is not simplex-bounded. That is, in general $\text{AEP}(\mathcal{C}) \cap K$ and $\SB{K}$ need not be equal. We still do not know whether either of these sets is equal to the set of commuting extreme points.

\begin{question}
Let $\mathcal{C} = \Wmax{}(K)$. Does there exist a choice of $K$ such that both of the equalities $\SB{K} = \FEP{\mathcal{C}}$ and $\operatorname{AEP}(\mathcal{C}) \cap K = \FEP{\mathcal{C}}$ fail?
\end{question}

The potential equality of the simplex-bounded points of $K$ and the commuting extreme points of $\Wmax{}(K)$ would be an interesting \lq\lq local\rq\rq\hspace{0pt} version of the claim that $\Wmax{}(K) = \Wmin{}(K)$ if and only if $K$ is a simplex. The set
\[ K_p = \{(x, y) \in \mathbb{R}^2: |x|^p \leq y \leq 1 \} \]
remains of interest for $1 < p < 2$, as it has an absolute extreme point $(0, 0)$ that is not simplex-bounded, but that point has not been classified as a commuting extreme point or not. Following the discussion after Corollary \ref{cor:noFEPball}, we consider a simpler question: is $K_p$ scalable? 

\begin{question}
Fix $p \in (1, 2)$ and let $K_p = \{(x, y) \in \bR^2: |x|^p \leq y \leq 1\}$. Does there exist a constant $M$ such that $\Wmax{}(K_p) \subseteq M \cdot \Wmin{}(K_p)$?
\end{question}

We have a partial answer to this question: for $p > 4/3$, $K_p$ is not scalable, and we prove this using disks inside $K_p$ that approach the origin.

\begin{lemma}\label{lem:p_ball_containment}
Fix $p \in (1, 2)$ and the set $K_p = \{(x, y): |x|^p \leq y \leq 1\}$. For sufficiently small $c > 0$, the closed disk of radius $c - (pc)^{\frac{p}{2 - p}}$ centered at $(0, c)$ is contained in $K_p$.
\end{lemma}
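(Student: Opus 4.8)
The plan is to exploit convexity. Since $|x|^p$ is convex for $p\ge 1$, the epigraph $R=\{(x,y):y\ge |x|^p\}$ is a convex region, and $K_p = R\cap\{y\le 1\}$. A disk centered at $(0,c)$ lies in $R$ exactly when its radius does not exceed $\operatorname{dist}((0,c),\partial R)$, while the containment in $\{y\le 1\}$ is automatic for small $c$, since the top of the disk sits at height $c+r<2c<1$. So the whole problem reduces to a lower bound on the distance from $(0,c)$ to the graph $y=|x|^p$.

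Rather than minimize $\sqrt{x^2+(x^p-c)^2}$ directly, I would use the description of $R$ as the intersection of the upper half-planes bounded by its tangent lines, for which distances are explicit. For $s\ge 0$ the tangent line to $y=x^p$ at $(s,s^p)$ is $p s^{p-1}x - y - (p-1)s^p = 0$, and its distance from $(0,c)$ is
\[
g(s) = \frac{c+(p-1)s^p}{\sqrt{1+p^2 s^{2p-2}}}.
\]
By the symmetry $x\mapsto -x$ it suffices to treat $s\ge 0$, and $(0,c)$ lies on the correct side of every such line since that line meets the $y$-axis at height $-(p-1)s^p\le 0<c$. Hence the disk of radius $r$ is contained in $R$ as soon as $g(s)\ge r$ for all $s\ge 0$.

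To establish $g(s)\ge r=c-(pc)^{p/(2-p)}$, I would clear the square root using $\sqrt{1+u}\le 1+u/2$ together with $r\le c$, reducing the claim to the elementary inequality
\[
(p-1)s^p + (pc)^{p/(2-p)} \;\ge\; \tfrac{cp^2}{2}\,s^{2p-2}, \qquad s\ge 0.
\]
The quantity $\tfrac{cp^2}{2}s^{2p-2}-(p-1)s^p$ is maximized at $s_\ast=(cp)^{1/(2-p)}$ (a one-line calculus computation; the conditions $p>2p-2>0$ make this a genuine interior maximum, with the expression tending to $0$ as $s\to 0$ and to $+\infty$ as $s\to\infty$), and its maximal value works out to exactly $\tfrac{2-p}{2}(pc)^{p/(2-p)}$. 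Since $\tfrac{2-p}{2}<1$, this is strictly dominated by the constant $(pc)^{p/(2-p)}$, so the displayed inequality holds with slack $\tfrac{p}{2}(pc)^{p/(2-p)}$. Finally I would note that $r=c-(pc)^{p/(2-p)}>0$ for all small $c$ because $p/(2-p)>1$, which justifies the phrase ``for sufficiently small $c$'' and makes the disk genuinely nondegenerate.

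The main obstacle is the single-variable inequality in the last paragraph. Because $2p-2<p$ for $p<2$, the term $s^{2p-2}$ dominates $s^p$ for small $s$, so it cannot simply be discarded; the constant term $(pc)^{p/(2-p)}$ coming from the radius is exactly calibrated to absorb the worst-case overshoot. Identifying the maximizer $s_\ast$ and recognizing that the maximal overshoot equals $\tfrac{2-p}{2}(pc)^{p/(2-p)}$ — matching the chosen radius up to the harmless factor $\tfrac{2-p}{2}<1$ — is the crux of the argument, and it is precisely what explains the otherwise opaque exponent $\tfrac{p}{2-p}$ appearing in the statement.
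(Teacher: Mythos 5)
Your proposal is correct, but it takes a genuinely different route from the paper's. The paper never introduces tangent lines: it restricts the defining inequality to the boundary circle, rewrites $|x|^p \le y$ as $x^2 \le y^{2/p}$, and reduces the containment to the one-variable inequality $f(y) := y^{2/p} + (y-c)^2 \ge r^2$ for $c - r \le y \le c + r$. It then notes that $f'$ is increasing, drops the linear term of $f'$ to locate a threshold $\delta = (pc)^{\frac{p}{2-p}}$ past which $f$ is increasing, and chooses $r = c - \delta$ so that $f(y) \ge f(\delta) \ge (c-\delta)^2 = r^2$ on the whole relevant range. You instead use support-line duality: the epigraph $R = \{y \ge |x|^p\}$ is the intersection of the half-planes above its tangent lines, so disk containment becomes the lower bound $g(s) \ge r$ on the distance from $(0,c)$ to each tangent line; after the linearization $\sqrt{1+u} \le 1 + u/2$ and $r \le c$, this becomes an explicit one-variable maximization whose maximal overshoot $\frac{2-p}{2}(pc)^{\frac{p}{2-p}}$ is absorbed by the radius deficit. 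Both proofs pivot on the same critical scale $(pc)^{\frac{p}{2-p}}$, but they buy different things: yours explains the exponent as the exact worst-case tangency scale and even shows the stated radius has room to spare (slack $\frac{p}{2}(pc)^{\frac{p}{2-p}}$), and it generalizes readily to other convex boundary curves; the paper's monotonicity trick is more elementary, needing no facts about supporting half-planes of convex sets and no exact solution of a critical-point equation.

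One small slip: in your parenthetical, $\frac{cp^2}{2}s^{2p-2} - (p-1)s^p$ tends to $-\infty$, not $+\infty$, as $s \to \infty$, precisely because $p > 2p - 2$; as written your claim is self-contradictory, since a function tending to $+\infty$ cannot have an interior global maximum. The limit being $-\infty$ (together with the limit $0$ at $s \to 0^+$ and the sign change of the derivative at $s_* = (cp)^{\frac{1}{2-p}}$) is what makes $s_*$ a genuine global maximizer, and your computation of $s_*$ and of the maximal value $\frac{2-p}{2}(pc)^{\frac{p}{2-p}}$ is correct, so the slip is harmless.
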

\begin{proof}
$K_p$ is convex, so we need only prove containment of the boundary circle. The disk of radius $r > 0$ centered at $(0, c)$ is contained in $K_p$ precisely when
\[ x^2 + (y - c)^2 = r^2 \,\,\,\,\,\,\,\,\, \implies \,\,\,\,\,\,\,\,\, |x|^p \leq y \leq 1. \]
The claim $|x|^p \leq y$ is the same as $x^2 \leq y^{2/p}$. If $x^2 + (y - c)^2 = r^2$, this means $r^2 - (y - c)^2 \leq y^{2/p}$. This gives us the equivalent implication
\[ x^2 + (y - c)^2 = r^2 \,\,\,\,\,\,\,\,\, \implies \,\,\,\,\,\,\,\,\, 0 \leq y \leq 1 \text{ and } y^{2/p} + (y - c)^2 \geq r^2. \]
The right hand side is independent of $x$, so we may focus on $y$ and $r$. We need $r > 0$ such that $r \leq c$ and $c + r \leq 1$, such that
\begin{equation}\label{eq:ball_containment}
c - r \leq y \leq c + r \,\,\,\,\,\,\,\,\, \implies \,\,\,\,\,\,\,\,\, y^{2/p} + (y - c)^2 \geq r^2.
\end{equation}

The function $f(y) := y^{2/p} + (y - c)^2$ has $f^\prime(y) = \frac{2}{p} y^{2/p - 1} + 2(y - c)$. Since $1 < p < 2$ implies $2/p - 1 > 0$, $f^\prime$ is increasing on $[0, +\infty)$. To find a region where $f^\prime(y) \geq 0$, we forgo solving $f^\prime(y) = 0$ and instead ignore the linear term. Solving $\frac{2}{p}y^{2/p - 1} - 2c = 0$ gives $y = (pc)^{\frac{p}{2 - p}}$, so we now have
\[ y \geq (pc)^{\frac{p}{2 - p}} \,\,\,\,\,\,\, \implies \,\,\,\,\,\,\,  f^\prime(y) \geq f^\prime( (pc)^{\frac{p}{2 - p}}) = 2(pc)^{\frac{p}{2 - p}} > 0, \]
and hence $f$ is increasing for $y \geq (pc)^{\frac{p}{2 - p}}$. We conclude that if $\delta := (pc)^{\frac{p}{2 - p}}$, then
\begin{equation}\label{eq:delta_ball} y \geq \delta \,\,\,\,\,\, \implies \,\,\,\,\,\,\, f(y) \geq f(\delta) \geq (\delta - c)^2 = (c - \delta)^2. \end{equation}
Since $\frac{p}{2 - p} > 1$, it holds that for sufficiently small $c$, $0 < \delta < c$ and $2c < 1$. If $r := c - \delta = c - (pc)^{\frac{p}{2 - p}}$, then we have that $0 < r < c$ and $c + r < 1$, and finally (\ref{eq:ball_containment}) holds as a consequence of (\ref{eq:delta_ball}). That is, the disk of radius $r = c - (pc)^{\frac{p}{2-p}}$ centered at $(0, c)$ is contained in $K_p$.
\end{proof}

\begin{theorem}\label{thm:Kpscale}
Fix $p \in (\frac{4}{3}, \infty)$. Then $K_p = \{(x, y) \in \mathbb{R}^2: |x|^p \leq y \leq 1\}$ is not scalable. That is, there is no $M > 0$ such that $\Wmax{}(K_p) \subseteq M \cdot \Wmin{}(K_p)$.
\end{theorem}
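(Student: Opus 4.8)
The plan is to argue by contradiction, combining the inscribed disks of Lemma~\ref{lem:p_ball_containment} with the anticommuting unitary pair from Corollary~\ref{cor:noFEPball}. Suppose $K_p$ were scalable, so that $\Wmax{}(K_p) \subseteq M \cdot \Wmin{}(K_p) = \Wmin{}(M K_p)$ for some $M > 0$. Let $F_1, F_2 \in M_2(\bC)$ be two anticommuting self-adjoint unitaries, so that the joint numerical range of $(F_1,F_2)$ is the closed unit disk. For small $c > 0$, set $r_c = c - (pc)^{p/(2-p)}$ and consider the tuple $G_c = (r_c F_1,\, cI + r_c F_2)$. A direct computation of vector states shows that the joint numerical range of $G_c$ is exactly the disk $B_c$ of radius $r_c$ centered at $(0,c)$, which lies in $K_p$ by Lemma~\ref{lem:p_ball_containment}. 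Hence $G_c \in \Wmax{}(B_c) \subseteq \Wmax{}(K_p) \subseteq \Wmin{}(M K_p)$.

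Next I would extract an operator inequality from membership in $\Wmin{}(M K_p)$. By definition of $\Wmin{}$, the tuple $G_c$ admits a commuting normal dilation: there are commuting self-adjoint operators $\tilde X, \tilde Y$ whose joint spectrum lies in $M K_p$ and whose compression to $\bC^2$ recovers $(r_c F_1,\, cI + r_c F_2)$. The essential geometric input is that every point $(s,t) \in M K_p$ satisfies $t \geq M^{-1} s^2$. Indeed, on $M K_p$ one has $t \geq M^{1-p}|s|^p$ and $|s| \leq M$, and since $p \leq 2$ the elementary bound $|s|^p \geq M^{p-2}s^2$ holds, giving $t \geq M^{1-p}M^{p-2}s^2 = M^{-1}s^2$. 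Because $\tilde X$ and $\tilde Y$ commute, joint functional calculus then upgrades this pointwise bound to the operator inequality $\tilde Y \geq M^{-1}\tilde X^2$.

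Now I would compress this inequality back to $\bC^2$. Using the standard estimate $P \tilde X^2 P \geq (P \tilde X P)^2$, valid for any self-adjoint $\tilde X$ and projection $P$, together with $P\tilde Y P = cI + r_c F_2$ and $P \tilde X P = r_c F_1$, I obtain
\[ cI + r_c F_2 \;\geq\; M^{-1}(r_c F_1)^2 \;=\; M^{-1} r_c^2 I. \]
Reading off the smallest eigenvalue and using $F_2^2 = I$ together with $\min$-eigenvalue $-1$ of $F_2$, this forces $-r_c \geq M^{-1}r_c^2 - c$, i.e. $c - r_c \geq M^{-1} r_c^2$, which is the necessary condition
\[ M\,(pc)^{p/(2-p)} \;\geq\; r_c^2 \]
produced by scalability.

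Finally I would let $c \to 0^+$. Since $r_c \sim c$, the right-hand side is of order $c^2$, while the left-hand side is of order $c^{\,p/(2-p)}$. The exponent satisfies $p/(2-p) > 2$ precisely when $3p > 4$, that is, $p > 4/3$; for such $p$ the left-hand side becomes negligible compared to the right-hand side, so the displayed inequality fails for all sufficiently small $c$, contradicting scalability. The step requiring the most care is the passage from the scalar bound $t \geq M^{-1}s^2$ on $M K_p$ to the operator inequality $\tilde Y \geq M^{-1}\tilde X^2$ and then to its compression; this is exactly where commutativity of the dilation and the inequality $P\tilde X^2 P \geq (P\tilde X P)^2$ are used, and it is what allows a single inscribed disk to do the job rather than a more delicate limiting family. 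I also expect that this argument is lossy for $p$ near $4/3$, which is consistent with the case $1 < p \leq 4/3$ being left open.
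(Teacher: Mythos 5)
Your argument, as written, proves the theorem only for $4/3 < p < 2$, while the statement covers all $p \in (\tfrac{4}{3}, \infty)$; the case $p \geq 2$ is a genuine gap, not a cosmetic one. Three of your ingredients fail simultaneously there: Lemma \ref{lem:p_ball_containment} is stated (and its radius formula $c - (pc)^{p/(2-p)}$ only makes sense) for $1 < p < 2$; your key scalar bound $t \geq M^{-1}s^2$ on $M K_p$ is false for $p > 2$, since $M K_p$ contains the boundary points $(s, M^{1-p}|s|^p)$ and $M^{1-p}|s|^p / s^2 \to 0$ as $s \to 0$; and the equivalence $p/(2-p) > 2 \iff p > 4/3$ presupposes $2 - p > 0$. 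Nor can you simply replace $s^2$ by $|s|^p$ in the dilation step: the compression inequality $P |\tilde X|^p P \geq |P \tilde X P|^p$ would require operator convexity of $s \mapsto |s|^p$, which holds only for $1 \leq p \leq 2$. The paper disposes of $p \geq 2$ by a separate soft argument that you could import verbatim: since $K_2 \subseteq K_p$ for $p \geq 2$, the origin lies on a closed disk inscribed in $K_p$ (radius $1/2$, center $(0,1/2)$), so Corollary \ref{cor:noFEPgen} shows $(0,0)$ is not a commuting extreme point of $\Wmax{}(K_p)$; but if $\Wmax{}(K_p) \subseteq \Wmin{}(M K_p)$, then $(0,0)$, which is still an extreme point of $M K_p$, would be a commuting extreme point by Definition \ref{def:FEP}. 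You cite Corollary \ref{cor:noFEPball} in your opening sentence but use only the anticommuting pair from it; the missing case split is exactly where that circle of ideas is actually needed.

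For $4/3 < p < 2$, however, your proof is correct and takes a genuinely different route from the paper's. The paper inscribes the same disks and then quotes the quantitative ball-containment theorem \cite[Theorem 5.6]{PSS}: from $\Wmax{}((0,c) + r\overline{\bD}) \subseteq \Wmin{}((0,M) + M\overline{\bD})$ (using $K_p \subseteq (0,1) + \overline{\bD}$) it extracts $M \geq c^2/(2(pc)^{p/(2-p)})$ and lets $c \to 0$. You instead build an explicit witness $(r_c F_1, cI + r_c F_2)$ whose joint numerical range is exactly the inscribed disk, pass to a commuting self-adjoint dilation $(\tilde X, \tilde Y)$ with joint spectrum in $M K_p$, upgrade the pointwise parabola bound to the operator inequality $\tilde Y \geq M^{-1}\tilde X^2$ by joint functional calculus, and compress via $P \tilde X^2 P \geq (P \tilde X P)^2$ to get $c - r_c \geq M^{-1} r_c^2$; asymptotically this is the same obstruction $M \gtrsim c^2/(pc)^{p/(2-p)}$ as the paper's, and every step checks out (the vector-state characterization of $\Wmax{}$, the normal-dilation characterization of $\Wmin{}$, and the Cauchy--Schwarz inequality for compressions are all valid as you use them). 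What your route buys is self-containment: no appeal to \cite{PSS}. What it costs is exactly the gap above: it is wedded to the operator convexity of $s^2$, whereas the paper's division into a soft case ($p \geq 2$) and a quantitative case ($4/3 < p < 2$) covers the full range of the theorem.
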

\begin{proof}
If $p \geq 2$, then $(0,0)$ is not a commuting extreme point of $K_p$ by Corollary \ref{cor:noFEPball}, so $K_p$ is not scalable. We may assume $4/3 < p < 2$.

With $p$ fixed, Lemma \ref{lem:p_ball_containment} shows that for sufficiently small $c > 0$, if $r := c - (pc)^{\frac{p}{2 - p}}$, then $(0, c) + r \overline{\bD} \subseteq K_p$. We also have that since $p < 2$, $K_p \subseteq K_2 \subseteq (0, 1) + \overline{\bD}$. If $K_p$ is scalable, then we may fix $M > 0$ such that $\Wmax{}(K_p) \subseteq M \cdot \Wmin{}(K_p)$, which implies that
\[ \Wmax{}( (0, c) + r \overline{\bD}) \subseteq \Wmin{}((0, M) + M \overline{\bD}). \]
The disk is a Euclidean ball of real dimension $d = 2$, so applying \cite[Theorem 5.6]{PSS} shows that
\[ M \geq \sqrt{(M - c)^2 + r^2} + r, \]
which simplifies to
\[ M \geq \frac{c^2}{2(c - r)} = \frac{c^2}{2(pc)^{\frac{p}{2 - p}}}. \]
Now, $4/3 < p < 2$ is fixed, so $\frac{p}{2 - p} > 2$, which implies that the right hand side grows arbitrarily large as $c$ decreases to $0$. This is a contradiction, since the inequality must hold for sufficiently small $c$.
\end{proof}

\section*{Acknowledgments}

This work was supported by the United States Naval Academy Jr. NARC program.

\bibliographystyle{amsplain}

\end{document}